\newtheorem{theorem}{Theorem}[section]
\newtheorem{lemma}[theorem]{Lemma}
\newtheorem{proposition}[theorem]{Proposition}
\theoremstyle{definition}
\newtheorem{definition}[theorem]{Definition}
\newtheorem{corollary}[theorem]{Corollary}
\newtheorem{remark}[theorem]{Remark}
    \newcommand{\Rmnum}[1]{\expandafter\@slowromancap\romannumeral #1@}
\theoremstyle{definition}
\theoremstyle{definition}
\theoremstyle{definition}
\newcommand{\Z}{\mathbb{Z}}
\newcommand{\C}{\mathbb{C}}
\newcommand{\T}{\mathbb{T}}
\newcommand{\Aff}{\operatorname{Aff}}
\newcommand{\morp}{contractive completely positive linear map}
\newcommand{\hm}{homomorphism}
\newcommand{\dt}{\delta}
\newcommand{\ep}{\epsilon}
\newcommand{\andeqn}{\,\,\,{\rm and}\,\,\,}
\newcommand{\CA}{$C^*$-algebra}
\newcommand{\SCA}{$C^*$-subalgebra}
\newcommand{\af}{{\alpha}}
\newcommand{\dist}{{\rm dist}}
\newcommand{\beq}{\begin{eqnarray}}
\newcommand{\eneq}{\end{eqnarray}}
\newcommand{\tforal}{\,\,\,\text{for\,\,\,all}\,\,\,}
\newcommand{\tand}{\,\,\,\text{and}\,\,\,}
\title{ Rotation algebras and Exel trace formula }
\author{ Jiajie Hua  and Huaxin Lin
 }
\date{}
\begin{document}

\maketitle

\begin{abstract}
 We found that  if $u$ and $v$ are any two unitaries in
 a unital \CA\, with $\|uv-vu\|<2$ and $uvu^*v^*$ commutes with $u$ and $v,$  then the \SCA\, $A_{u,v}$ generated by $u$ and $v$ is isomorphic to a quotient of the rotation algebra $A_\theta$ provided that $A_{u,v}$ has a unique tracial state.
 We also found that the Exel trace formula holds in any unital \CA.
 Let $\theta\in (-1/2, 1/2)$ be a real  number. We prove the following:
 For any $\ep>0,$ there exists $\dt>0$ satisfying the following:
 if $u$ and $v$ are two unitaries in any unital simple \CA\, $A$ with tracial rank zero  such that
 $$
 \|uv-e^{2\pi i\theta}vu\|<\dt\andeqn {1\over{2\pi i}}\tau(\log(uvu^*v^*))=\theta,
 $$
 for all tracial state $\tau$ of $A,$ then there exists a pair of unitaries  $\tilde{u}$ and $\tilde{v}$ in $A$
 such that
 $$
 \tilde{u}\tilde{v}=e^{2\pi i\theta} \tilde{v}\tilde{u},\,\, \|u-\tilde{u}\|<\ep\andeqn \|v-\tilde{v}\|<\ep.
 $$

\end{abstract}

\section{Introduction}

Let $\theta$ be a real number in $(-1/2, 1/2)$ and let
$A_\theta$ be the rotation algebra, which is  the universal \CA\, generated by a pair of unitaries $u_\theta$ and $v_\theta$ subject to the relation that $u_\theta v_\theta=e^{2\pi i \theta}v_\theta u_\theta.$  Let $A$ be a unital \CA\, and let $u$ and $v$ be two unitaries
with $\|uv-vu\|<2.$  Consider the \SCA\, $A_{u,v}$ generated by $u$ and $v.$  A question that one might ask is  when
$A_{u,v}$ is isomorphic to a quotient of $A_\theta$ if $uvu^*v^*$ commutes with $u$ and $v.$  This may  seem like a rather unreasonable question,
however if $A_{u,v}$ has a unique tracial state, the answer is  always ``yes",
which  has a  simple proof.

This brings us to  the following question:

 ({\bf Q}1):
Let $\ep>0.$ Is there a $\dt>0$ satisfying the following:
if $u$ and $v$ are two unitaries in a unital simple \CA\, $A$ with tracial rank zero satisfying
\beq\label{Intr-1}\nonumber
\|uv-e^{2\pi i \theta}vu\|<\dt\andeqn\\\label{Intr-2}
{1\over{2\pi i}} \tau(\log(uvu^*v^*))=\theta,\quad
\eneq
then there exists a pair of unitaries ${\tilde u}$ and ${\tilde v}$ in $A$ such that
\beq\label{Intr-3}
{\tilde u}{\tilde v}=e^{2\pi i\theta}{\tilde v}{\tilde u},\,\,\,
\|u-{\tilde u}\|<\ep\andeqn \|v-{\tilde v}\|<\ep?\nonumber
\eneq
  Note that  $\dt$ is a universal constant independent of $u,$ $v$ and \CA\, $A.$

A related  old problem from the 1950s, popularized by Halmos, asks: if a pair
of  hermitian matrices almost commute, then are they necessarily close to a pair of
commuting  hermitian matrices \cite{Berg,Davidson,Halmos,Rosenthal}?
Voiculescu realized that the answer is negative if the  word hermitian is replaced by unitary.
In fact, Voiculescu showed that, when $\theta=0,$  something like   (\ref{Intr-2}) in question ({\bf Q}1)
is necessary.

Despite  Voiculescu's  example, however, the related problem about almost commuting hermitian was solved affirmatively  by the second
named author in \cite{Lin2} (see also \cite{Friis} for a simplified exposition).
The problem whether a pair of almost commuting unitaries can be approximated by a pair of commuting unitaries
had been  further studied by \cite{Choi,Davidson,Lor,ExL1,ExL2} and others.
Exel and Loring (\cite{ExL2}) showed that the condition (\ref{Intr-2}) is necessary for ({\bf Q}1) in
the case that  $\theta=0$ following Voiculescu's example and they recognized
that the obstacle in Voiculescu's example  is the  bott element.
Things moved fast in the  middle of 1990's with the proof in (\cite{Lin2}).
It has been proved that ({\bf Q}1) has an affirmative answer when $\theta=0$ (see \cite{Golin}, \cite{Lor1} and  \cite{ELP}).
The trace formula for the bott element provided by Exel  (\cite{Exel})
is a very convenient tool. In fact, the recent development in the connection to the  Elliott program of classification
of amenable \CA s shows that the Exel trace formula has many further applications.
The Exel trace formula brought the bott element, a topological obstruction, and
rotation number, a dynamical description together. Originally,
 the Exel trace formula was proved in  matrix algebras (\cite{Exel}).
 We note that it in fact holds in general \CA s.  One might say that this paper is
a further understanding of the Exel trace formula in the context of  rotation algebras.

Shortly after the  first version of these notes posted, Terry Loring informed us his joint work
on {(\bf Q}1). In fact, Eilers and Loring  showed in \cite{EL} that, for rational  values in $(-1/2, 1/2)$ (Eilers, Loring and Pedersen showed in \cite{ELP1} that, for rational  value $\frac{1}{2}$), the answer
to ({\bf Q}1) is affirmative if the class of all unital simple \CA s with real rank zero is replaced by
class of finite dimensional simple \CA s.  It should be noted that, when $A$ is a matrix algebra $M_n,$
${1\over{2\pi i}}\tau(\log(uvu^*v^*))$ is always a rational number. Moreover, when $\theta$ is an irrational number,
$A_\theta$ is always infinite dimensional. Therefore there is no \hm s from $A_\theta$ into $M_n.$
It seems natural to think that the  next class of \CA s  to finite dimensional simple \CA s
is the class of  unital simple AF-algebras, or more generally, unital simple \CA s of tracial  rank zero (see \ref{Dtr0} below).
We show that the answer
to ({\bf Q}1) is in the affirmative.

This paper is organized as follows. In section 2, we list some notations and some known results about certain universal \CA s generated by two unitaries. In section 3, we give a  proof that the Exel trace formula  holds for any unital $C^*$-algebra.
In section 4, we show that the answer to question ({\bf Q}1) is in the affirmative for irrational numbers $\theta.$
In the last section, we show that, when $\theta$ is rational,
we also have an affirmative answer to a version of ({\bf Q}1).
In fact, we allow a somewhat larger  class of \CA s, i.e., the class of unital simple \CA s   of real rank zero and stable rank one.

\section{Preliminaries}

 All statements in this section are known. We list here for the convenience.
\begin{definition}
{\rm
Let $A$ be a unital \CA. Denote by $T(A)$ the tracial state space of $A.$
We will use $\tau$ for $\tau\otimes Tr$ on $M_n(A),$ where  $Tr$ is the standard trace on $M_n,$ $n=1,2,....$
Denote by $\rho_A: K_0(A)\to \Aff(T(A))$ the order preserving map
induced by $\rho_{A}([p])(\tau)=\tau (p)$ for all projections $p\in A\otimes M_n,$ $n=1,2,....$
}
\end{definition}

\begin{definition}
{\rm
Let $A$ be a unital \CA\, and let $u\in A$ be a unitary.
Define ${\rm Ad}\, u(a)=u^*au$ for all $a\in A.$

}
\end{definition}

The $C^*$-algebra $C(\mathbb{T}^2)$ of all continuous complex valued functions on the two-torus is well known to be the
universal $C^*$-algebra generated by two commuting unitary elements.

\begin{definition}\label{SoftTorus}
{\rm
Let $\ep\in [0, 2).$ Recall that  the ``Soft Torus" $\mathcal{T}_{\ep}$  is  the universal
$C^*$-algebra generated by a pair of elements $\mathfrak{u}_{\ep}$ and $\mathfrak{v}_{\ep},$ subject to the relations $\mathfrak{u}_{\ep}^*\mathfrak{u}_{\ep}=\mathfrak{u}_{\ep}\mathfrak{u}_{\ep}^*=1=\mathfrak{v}_{\ep}^*\mathfrak{v}_{\ep}=\mathfrak{v}_{\ep}\mathfrak{v}_{\ep}^*$ and $\|\mathfrak{u}_{\ep}\mathfrak{v}_{\ep}-\mathfrak{v}_{\ep}\mathfrak{u}_{\ep}\|\leq \ep.$

Given $\theta\in \mathbb{R},$ let $A_\theta$ be the  universal $C^*$-algebra generated by a pair of elements $u_{\theta}$ and $v_{\theta}$, subject to the relations $u_{\theta}^*u_{\theta}=u_{\theta}u_{\theta}^*=1=v_{\theta}^*v_{\theta}=v_{\theta}v_{\theta}^*$ and $u_{\theta}v_{\theta}=e^{2\pi i \theta} v_{\theta}u_{\theta}.$ If $\theta$ is irrational (rational), $A_{\theta}$ is called  the irrational (rational) rotation algebra.
The algebras $A_{\theta}$ are usually called noncommutative tori, since $C(\mathbb{T}^2)\cong A_0.$

Let $B_{\ep}$ be the universal $C^*$-algebra generated by a countable set $\{x_n:n\in \mathbb{Z}\}$ subject to the conditions that each $x_n$ is unitary and that $\|x_{n+1}-x_{n}\|\leq \ep$ for all $n.$

Let $\alpha_{\ep}$ be the automorphism of $B_{\ep}$ specified by $\alpha_{\ep}(x_n)=x_{n+1}.$
More details for the ``Soft Torus" ${\cal T}_{\ep}$ and $B_\ep$ can be found in \cite{Exel}.
}
\end{definition}
\begin{theorem}[Theorem 2.2 of \cite{Exel}]\label{homotopy} Let $z$ denote the canonical generator of the algebra $C(\mathbb{T})$ of continuous functions on the unit circle, and let $\psi_{\ep}:B_{\ep}\rightarrow C(\mathbb{T})$ be the unique homomorphism such that $\psi_{\ep}(x_n)=z$ for all $n.$ If $\ep<2,$
then $\psi_{\ep}$ is a homotopy equivalence between $B_{\ep}$ and $C(\mathbb{T}).$
\end{theorem}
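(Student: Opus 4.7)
The plan is to exhibit an explicit homotopy inverse $\phi_\epsilon: C(\mathbb{T}) \to B_\epsilon$ to $\psi_\epsilon$. The natural candidate is the homomorphism sending $z$ to $x_0$, which is well-defined since $x_0$ is unitary. The composition $\psi_\epsilon \circ \phi_\epsilon$ fixes the generator $z$, hence equals $\mathrm{id}_{C(\mathbb{T})}$. The substantive content is to show that $\phi_\epsilon \circ \psi_\epsilon$ --- which sends every $x_n$ to $x_0$ --- is homotopic to $\mathrm{id}_{B_\epsilon}$.

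To build such a homotopy, the condition $\epsilon < 2$ lets me take a principal-branch logarithm of each $x_{n+1} x_n^*$. Since $\|x_{n+1} x_n^* - 1\| = \|x_{n+1} - x_n\| \leq \epsilon < 2$, the spectrum of the unitary $x_{n+1} x_n^*$ avoids $-1$, so I may define a self-adjoint element $h_n := -i\log(x_{n+1} x_n^*)$ with $\sigma(h_n) \subseteq (-\pi, \pi)$ and $e^{i h_n} = x_{n+1} x_n^*$. For $t \in [0,1]$ I set
\begin{equation*}
y_0(t) := x_0, \qquad y_n(t) := e^{i t h_{n-1}} e^{i t h_{n-2}} \cdots e^{i t h_0}\, x_0 \quad (n > 0),
\end{equation*}
and analogously $y_n(t) := e^{-i t h_n} \cdots e^{-i t h_{-1}}\, x_0$ for $n < 0$. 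Each $y_n(t)$ is a unitary depending continuously on $t$, with $y_n(0) = x_0$ and $y_n(1) = x_n$ (by induction on $|n|$).

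The decisive step is to verify that the relation $\|y_{n+1}(t) - y_n(t)\| \leq \epsilon$ survives for every intermediate $t \in [0,1]$. Since $y_{n+1}(t) = e^{i t h_n} y_n(t)$, this reduces to the spectral estimate
\begin{equation*}
\|e^{i t h_n} - 1\| \;=\; \sup_{s \in \sigma(h_n)} 2\bigl|\sin(ts/2)\bigr| \;\leq\; \sup_{s \in \sigma(h_n)} 2\bigl|\sin(s/2)\bigr| \;=\; \|e^{i h_n} - 1\| \;\leq\; \epsilon,
\end{equation*}
where the middle inequality uses that $|s|/2 \leq \pi/2$ on $\sigma(h_n)$ and that $|\sin|$ is monotone on $[0,\pi/2]$. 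The universal property of $B_\epsilon$ then lifts the assignments $x_n \mapsto y_n(\cdot)$ to a $*$-homomorphism $\Psi: B_\epsilon \to C([0,1], B_\epsilon)$; evaluating at $t = 0$ and $t = 1$ recovers $\phi_\epsilon \circ \psi_\epsilon$ and $\mathrm{id}_{B_\epsilon}$ respectively, giving the desired homotopy.

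I expect the only real subtlety to be this spectral estimate: one must pin down the logarithm branch so that $\sigma(h_n)$ lies strictly inside $(-\pi,\pi)$ and then invoke monotonicity of $|\sin(\cdot/2)|$ to propagate the defining norm bound of $B_\epsilon$ uniformly through every intermediate time $t$. The strict inequality $\epsilon < 2$ is exactly what makes this estimate work, and once it is in place the universal property handles the rest.
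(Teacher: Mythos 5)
Your proposal is correct, and it is essentially the known argument: the paper itself does not prove this statement but quotes it as Theorem 2.2 of Exel's soft-torus paper, whose proof likewise takes the homotopy inverse $z\mapsto x_0$ and connects $\phi_\epsilon\circ\psi_\epsilon$ to $\mathrm{id}_{B_\epsilon}$ by rescaling the logarithms $h_n=-i\log(x_{n+1}x_n^*)$. Your key spectral estimate $\|e^{ith_n}-1\|=\sup_{s\in\sigma(h_n)}2|\sin(ts/2)|\le\|e^{ih_n}-1\|\le\epsilon$ for $t\in[0,1]$, which is exactly where $\epsilon<2$ is used, is the same mechanism that makes the universal property applicable at every intermediate time, so your write-up matches the intended proof.
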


\begin{proposition}[Proposition 2.3 of \cite{Exel}]\label{crossed product} For all $\ep\in [0,2)$ one has an isomorphism $\varphi: \mathcal{T}_{\ep}\to B_{\ep} \rtimes_{\alpha_{\ep}}\mathbb{Z}$  such that
$\varphi(\mathfrak{u}_{\ep})=x_0$ and $\varphi(\mathfrak{v}_{\ep}\mathfrak{u}_{\ep}\mathfrak{v}_{\ep}^{*})=x_1.$

\end{proposition}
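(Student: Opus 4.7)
The plan is to invoke the three relevant universal properties (of $\mathcal{T}_{\ep}$, of $B_{\ep}$, and of the crossed product $B_{\ep}\rtimes_{\alpha_{\ep}}\mathbb{Z}$) to build $*$-homomorphisms in both directions, and then to verify on generators that they are mutual inverses.

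For the forward direction, let $V\in B_{\ep}\rtimes_{\alpha_{\ep}}\mathbb{Z}$ denote the canonical implementing unitary, so $Vx_nV^{*}=x_{n+1}$ for every $n.$ Then $x_0$ and $V$ are unitaries in the crossed product, and
$$
\|x_0V-Vx_0\|=\|V^{*}x_0V-x_0\|=\|x_{-1}-x_0\|\le \ep.
$$
By the universal property of $\mathcal{T}_{\ep}$ there is a unique $*$-homomorphism $\varphi:\mathcal{T}_{\ep}\to B_{\ep}\rtimes_{\alpha_{\ep}}\mathbb{Z}$ sending $\mathfrak{u}_{\ep}\mapsto x_0$ and $\mathfrak{v}_{\ep}\mapsto V,$ and a direct computation gives $\varphi(\mathfrak{v}_{\ep}\mathfrak{u}_{\ep}\mathfrak{v}_{\ep}^{*})=Vx_0V^{*}=x_1,$ matching the asserted value.

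For the reverse direction, set $y_n:=\mathfrak{v}_{\ep}^{\,n}\mathfrak{u}_{\ep}\mathfrak{v}_{\ep}^{\,-n}\in \mathcal{T}_{\ep}.$ Each $y_n$ is unitary, and since $y_{n+1}-y_n=\mathfrak{v}_{\ep}^{\,n}(\mathfrak{v}_{\ep}\mathfrak{u}_{\ep}-\mathfrak{u}_{\ep}\mathfrak{v}_{\ep})\mathfrak{v}_{\ep}^{\,-n-1}$ and $\mathfrak{v}_{\ep}$ is unitary, one has $\|y_{n+1}-y_n\|\le \ep.$ The universal property of $B_{\ep}$ therefore yields a $*$-homomorphism $\pi:B_{\ep}\to \mathcal{T}_{\ep}$ with $\pi(x_n)=y_n.$ The unitary $\mathfrak{v}_{\ep}$ implements $\alpha_{\ep}$ through $\pi,$ since $\mathfrak{v}_{\ep}\pi(x_n)\mathfrak{v}_{\ep}^{*}=y_{n+1}=\pi(\alpha_{\ep}(x_n)),$ so $(\pi,\mathfrak{v}_{\ep})$ is a covariant pair for $(B_{\ep},\mathbb{Z},\alpha_{\ep}).$ The universal property of the crossed product then produces $\psi:B_{\ep}\rtimes_{\alpha_{\ep}}\mathbb{Z}\to \mathcal{T}_{\ep}$ with $\psi|_{B_{\ep}}=\pi$ and $\psi(V)=\mathfrak{v}_{\ep}.$

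To finish, one checks on generators that the two compositions are identities: $\psi(\varphi(\mathfrak{u}_{\ep}))=\psi(x_0)=y_0=\mathfrak{u}_{\ep}$ and $\psi(\varphi(\mathfrak{v}_{\ep}))=\psi(V)=\mathfrak{v}_{\ep},$ while $\varphi(\psi(x_n))=\varphi(y_n)=V^{n}x_0V^{-n}=x_n$ and $\varphi(\psi(V))=\varphi(\mathfrak{v}_{\ep})=V.$ Hence $\varphi$ and $\psi$ are mutually inverse $*$-isomorphisms. The only delicate point is bookkeeping with the convention for $\alpha_{\ep}$ and for the implementing unitary; once one fixes $Vx_nV^{*}=x_{n+1},$ the covariance relation $\mathfrak{v}_{\ep}y_n\mathfrak{v}_{\ep}^{*}=y_{n+1}$ is the single observation that bridges the two presentations, and the rest is a routine verification on generators.
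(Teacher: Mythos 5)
Your proof is correct: the estimates $\|x_0V-Vx_0\|=\|x_{-1}-x_0\|\le\ep$ and $\|y_{n+1}-y_n\|\le\ep$ are right, the covariance relation $\mathfrak{v}_{\ep}y_n\mathfrak{v}_{\ep}^{*}=y_{n+1}$ does give a covariant pair, and checking the two compositions on generators suffices since generators generate. The paper itself gives no argument for this statement---it simply cites Proposition 2.3 of Exel's soft-torus paper---and your two-way universal-property argument is essentially the standard proof found there; the only implicit point worth noting is that for the group $\mathbb{Z}$ the full and reduced crossed products coincide, so the universal property you invoke for $\psi$ applies no matter which crossed product is meant.
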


This is proved in Proposition 2.3 of \cite{Exel}.
We would like to bring the attention
 that $\varphi(\mathfrak{u}_{\ep})=x_0$ and $\varphi(\mathfrak{v}_{\ep}\mathfrak{u}_{\ep}\mathfrak{v}_{\ep}^{*})=x_1.$

In what follows  we will  identity $x_0$ with $\mathfrak{u}_{\ep}$ and $x_1=\mathfrak{v}_{\ep}\mathfrak{u}_{\ep}\mathfrak{v}_{\ep}^{*}.$

Let $z$ and $w$ denote the coordinate functions on the two-torus so that $z$ and $w$ represent two unitaries in $C(\mathbb{T}^2).$ There is a unital \hm\, $\varphi_{\ep}: {\cal T}_{\ep}\to C(\T^2)$ such that $\varphi_{\ep}(\mathfrak{u}_{\ep})=z$ and $\varphi_{\ep}(\mathfrak{v}_{\ep})=w.$

By the proof of Theorem 2.4 of \cite{Exel}, we have the following commutative diagram with exact rows.
$$
\xymatrix{
  0  \ar[r]^{} & K_0(B_{\ep}) \ar[d]_{\psi_{\ep_*}} \ar[r]^{} & K_0(\mathcal{T}_{\ep}) \ar[d]_{\varphi_{\ep_*}} \ar[r]^{\partial} & K_{1}(B_{\ep}) \ar[d]_{\psi_{\ep_*}} \ar[r]^{} & 0  \\
  0 \ar[r]^{} & K_{0}(C(\mathbb{T})) \ar[r]^{} & K_0(C(\mathbb{T}^2)) \ar[r]^{\partial} & K_{1}(C(\mathbb{T})) \ar[r]^{} & 0   }
$$
where $\psi_{\ep_*}$ and $\varphi_{\ep_*}$ are isomorphisms.

\begin{definition}\label{dbot}

{\rm By the above diagram, there is an element $b\in K_0(C(\mathbb{T}^2))$ such that $\partial(b)=[z]$ in $K_{1}(C(\mathbb{T})).$
Denote by
$b_{\ep}$ the element in $K_0(\mathcal{T}_{\ep})$ defined by $b_{\ep}=\varphi_{\ep_{*}}^{-1}(b),$ then
$\partial(b_{\ep})=[x_0]=[\mathfrak{u}_{\ep}]$ in $K_{1}(B_{\ep}).$

We may assume that there are projections $p_{\ep},\, q_{\ep}\in M_K(\mathcal{T}_{\ep})$
such that $[p_{\ep}]-[q_{\ep}]=b_{\ep},$ where $K$ is an integer.
Note that
\begin{eqnarray}\label{beptrace}
|{\tau}\circ \rho_{\mathcal{T}_{\ep}}(b_{\ep})|\le 2K
\end{eqnarray}
for all $\tau\in T(\mathcal{T}_{\ep}).$
}
\end{definition}

\begin{definition}\label{Dbott}
{\rm
Define
 $$ f(e^{2\pi it})=\left\{
\begin{aligned}
1-2t, & & \mbox{if } 0\leq t\leq 1\slash 2, \\
-1+2t, & & \mbox{if }1\slash 2< t\leq 1, \\
\end{aligned}
\right.
$$
$$ g(e^{2\pi it})=\left\{
\begin{aligned}
(f(e^{2\pi it})-f(e^{2\pi it})^2)^{\frac{1}{2}}, & & \mbox{if } 0\leq t\leq 1\slash 2, \\
0,\phantom{(f(e^{2\pi it})-f(e^{2\pi it})^2)} & & \mbox{if }1\slash 2< t\leq 1, \\
\end{aligned}
\right.
$$
$$ h(e^{2\pi it})=\left\{
\begin{aligned}
0,\phantom{(f(e^{2\pi it})-f(e^{2\pi it})^2)} & & \mbox{if } 0\leq t\leq 1\slash 2, \\
(f(e^{2\pi it})-f(e^{2\pi it})^2)^{\frac{1}{2}}, & & \mbox{if }1\slash 2< t\leq 1. \\
\end{aligned}
\right.
$$

These are non-negative continuous functions defined on the unit circle.

Let $A$ be a unital $C^*$-algebra and $u,v\in A$ be two unitaries, define
$$ e(u,v)=\left(
\begin{aligned}
f(u)\phantom{f(v)} & & g(u)+h(u)v^* \\
g(u)+vh(u) & & 1-f(u)\phantom{(v)} \\
\end{aligned}
\right).
$$
It is a positive element. Suppose that $uv=vu,$
then $e(u, v)$ is a projection.

In $M_2(C(\T^2)),$  $e(z, w)$ is a non-trivial rank one projection.
Then
$$
b=[e(z,w)]-[\left(
\begin{aligned}
1 & & 0 \\
0 & & 0 \\
\end{aligned}
\right) ]
$$
is often called the bott element for $C(\T^2).$ }
\end{definition}

There is $\delta_0 > 0$ (independent of unitaries $u, v$ and
$A$) such that if $\|[u, v]\| <\delta_0,$ then the  spectrum of the positive element $e(u, v)$ has a gap at
1\slash 2. The bott element of $u$ and $v$ is an element in $K_0(A)$ as defined by
$$\mbox{bott}(u, v) = [\chi_{1/2,\infty}(e(u, v))]-[\left(
\begin{aligned}
1 & & 0 \\
0 & & 0 \\
\end{aligned}
\right) ].$$

Note that (when $\|uv-vu\|<\dt_0$) there is a continuous function $\chi: [0, \infty) \to [0,1]$ such that
$$
\chi(e(u,v))=\chi_{(1/2,\infty)}(e(u, v)).
$$

The reader is referred to \cite{ExL1}, \cite{ExL2} and \cite{Lor} for more information about the bott element.

The following is also  known.

\begin{proposition}\label{Pbott}
If $\ep\in [0, \dt_0),$ then
$$
b_{\ep}={\rm bott}(\mathfrak{u}_{\ep}, \mathfrak{v}_{\ep}).
$$
\end{proposition}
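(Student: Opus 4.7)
The plan is to establish this proposition by naturality of the Bott construction with respect to the \hm\ $\varphi_\ep:\mathcal{T}_\ep\to C(\T^2)$ that sends $\mathfrak{u}_\ep$ to $z$ and $\mathfrak{v}_\ep$ to $w$. Every ingredient in the definition of $\mathrm{bott}(u,v)$ is preserved by an arbitrary unital \hm, so I expect $\varphi_{\ep_*}$ to send $\mathrm{bott}(\mathfrak{u}_\ep,\mathfrak{v}_\ep)$ to $\mathrm{bott}(z,w)$. In the commutative case $\mathrm{bott}(z,w)=b$, and since $\varphi_{\ep_*}$ is an isomorphism, applying $\varphi_{\ep_*}^{-1}$ will yield $b_\ep=\mathrm{bott}(\mathfrak{u}_\ep,\mathfrak{v}_\ep)$.

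To carry this out, I will first extend $\varphi_\ep$ entrywise to a unital \hm\ $M_2(\varphi_\ep):M_2(\mathcal{T}_\ep)\to M_2(C(\T^2))$. The matrix element $e(u,v)$ of Definition \ref{Dbott} is built from $u$, $v$ and the continuous functional calculus elements $f(u),g(u),h(u)$, all of which are respected by any unital $*$-\hm, so one obtains
$$
M_2(\varphi_\ep)\bigl(e(\mathfrak{u}_\ep,\mathfrak{v}_\ep)\bigr)=e(z,w).
$$
Since $\ep<\dt_0$, the element $e(\mathfrak{u}_\ep,\mathfrak{v}_\ep)$ has a spectral gap at $1/2$, and the continuous function $\chi$ of Definition \ref{Dbott} coincides with $\chi_{(1/2,\infty)}$ on its spectrum, so $\chi(e(\mathfrak{u}_\ep,\mathfrak{v}_\ep))$ is a projection. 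Naturality of continuous functional calculus under $*$-\hm s then delivers
$$
M_2(\varphi_\ep)\bigl(\chi(e(\mathfrak{u}_\ep,\mathfrak{v}_\ep))\bigr)=\chi(e(z,w))=e(z,w),
$$
where the last equality uses that $zw=wz$ forces $e(z,w)$ to be a projection with spectrum contained in $\{0,1\}$, on which $\chi$ acts as the identity.

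Combining these observations, $\varphi_{\ep_*}$ will send $\mathrm{bott}(\mathfrak{u}_\ep,\mathfrak{v}_\ep)=[\chi(e(\mathfrak{u}_\ep,\mathfrak{v}_\ep))]-[e_{11}]$ to $[e(z,w)]-[e_{11}]=b$, where $e_{11}$ denotes the constant rank-one diagonal matrix projection. Since $\varphi_{\ep_*}$ is an isomorphism by the commutative diagram preceding Definition \ref{dbot}, I conclude $\mathrm{bott}(\mathfrak{u}_\ep,\mathfrak{v}_\ep)=\varphi_{\ep_*}^{-1}(b)=b_\ep$. The only place I foresee needing care is the identity $\chi(e(z,w))=e(z,w)$, but this is immediate once one observes that $e(z,w)$ is a projection; apart from that, the whole argument is a transparent diagram chase and I anticipate no serious obstacle.
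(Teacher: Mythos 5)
Your argument is correct and is essentially the paper's own proof: apply $\varphi_{\ep}\otimes\operatorname{id}_{M_2}$, use naturality of the functional calculus to get $\chi(e(\mathfrak{u}_{\ep},\mathfrak{v}_{\ep}))\mapsto\chi(e(z,w))=e(z,w)$, hence $\varphi_{\ep_*}(\mathrm{bott}(\mathfrak{u}_{\ep},\mathfrak{v}_{\ep}))=b$, and conclude via the isomorphism $\varphi_{\ep_*}$ and the definition $b_{\ep}=\varphi_{\ep_*}^{-1}(b)$. Your extra care about the spectral gap at $1/2$ and $\chi$ acting as the identity on the spectrum $\{0,1\}$ of the projection $e(z,w)$ only makes explicit what the paper leaves implicit.
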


\begin{proof}
When $\ep\in [0, \dt_0),$
$$
(\varphi_{\ep}\otimes {\rm id}_{M_2})(\chi(e(\mathfrak{u}_{\ep}, \mathfrak{v}_{\ep})))=\chi(e(\varphi_{\ep}(\mathfrak{u}_{\ep}), \varphi_{\ep}(\mathfrak{v}_{\ep})))=\chi(e(z, w))=e(z,w).
$$
It follows that
$$
\varphi_{\ep_*}([\chi(e(\mathfrak{u}_{\ep}, \mathfrak{v}_{\ep}))]-[\left(
\begin{aligned}
1 & & 0 \\
0 & & 0 \\
\end{aligned}
\right) ])=b.
$$
Therefore
$$
b_{\ep}={\rm bott}(\mathfrak{u}_{\ep}, \mathfrak{v}_{\ep}).
$$

\end{proof}

\section{Exel trace formula}

{\label{dtrace}
{\rm Let $A$ be a unital \CA\, and let $\alpha: A\to A$ be an automorphism.
If $\tau$ is a trace on $A$ which is invariant under the action $\alpha$ and  if $u$ is implementing unitary of $\alpha,$ then $\tau\circ E$ gives a trace ${\tilde \tau}$ on $A\rtimes_{\alpha}\mathbb{Z}, $  where
$E: A\rtimes_{\alpha}\mathbb{Z}\to A$ is the expectation defined by
$E(\sum_{i=-n}^{n}a_i u^i) =a_0.$

\begin{definition}[Definition \Rmnum{2}.9 of \cite{Exel1}] If $\tau \in T(A)$ is a fixed tracial state  on a unital $C^*$-algebra $A,$ we say that
the pair $(A,\tau)$ is an integral $C^*$-algebra if $\rho_A(x)(\tau)\subset \mathbb{Z}$ for all $x\in K_0(A).$
\end{definition}
}

Let $A$ be a unital $C^*$-algebra and $U_{n}(A)$ be the group of all unitary elements of $A\otimes M_{n},$ $n=1,2,....$
We denote by $U_{\infty}(A)$  the inductive limit of
the sequence of groups
$$U_1(A)\stackrel{i_1}{\rightarrow}U_2(A)\stackrel{i_2}{\rightarrow}\cdots U_n(A)\stackrel{i_n}{\rightarrow}U_{n+1}(A)\stackrel{i_{n+1}}{\rightarrow}\cdots$$
where $i_{n}$ is defined by $$i_n(u)=u\oplus 1_A\in U_{n+1}(A)\quad \mbox{for all } u\in U_{n}(A)\mbox{ and all }n\in\mathbb{N}.$$
We often use $U(A)$  for $U_1(A).$

\begin{definition}[Definition \Rmnum{2}.2 of \cite{Exel1} and also see \cite{Harpe}] Let $A$ be a unital \CA\, and let $\tau\in T(A).$ We say that a group homomorphism $$\mbox{det}_{\tau}:U_{\infty}(A)\rightarrow \mathbb{T}$$
is a determinant associated with the tracial state  $\tau$ if for all self-adjoint elements $h\in M_{n}(A),$ one has
$$\mbox{det}_{\tau}(e^{ih})=e^{i\tau (h)}.$$
\end{definition}

It is proved by Exel (Theorem \Rmnum{2}.10 of \cite{Exel1}) that such determinant
exists if and only if $(A, \tau)$ is an integral $C^*$-algebra.

Let $\alpha$ be an automorphism of a unital $C^*$-algebra $A.$ Denote by $\partial: K_0(A\rtimes_{\alpha} \mathbb{Z})\rightarrow K_1(A)$ the connecting
map of the Pimsner-Voiculescu sequence (\cite{PimV}).

 Let us recall the following two results.

\begin{theorem}[Theorem  \Rmnum{5}.12 of \cite{Exel1}]\label{rotation} Let $(A, \tau)$ be an integral unital $C^*$-algebra and $\alpha$ be a
trace-preserving automorphism of $A.$ Then for every $a$ in $K_0(A\rtimes_{\alpha} \mathbb{Z}),$ we have $$\exp(2\pi i\tilde{\tau}\circ \rho_{A\rtimes_{\alpha}\Z}(a))={\rm det}_{\tau}(\alpha(u^{-1})u),$$ where $u$ is any unitary element of $U_{\infty}(A)$ whose $K_1$-class is $\partial(a).$
\end{theorem}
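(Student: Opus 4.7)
The plan is to realize $\partial$ concretely via the Pimsner-Voiculescu Toeplitz extension and match the two sides of the identity by an explicit computation, using the integrality hypothesis at the end to promote an equality modulo $\Z$ to an equality of exponentials. First I would reduce to the case $a=[p]$ for a single projection $p\in M_n(A\rtimes_\alpha\Z)$ by additivity: $\tilde\tau\circ\rho_{A\rtimes_\alpha\Z}$ is additive; $\det_\tau$ is a homomorphism; and if $u_i\in U_\infty(A)$ lifts $\partial(a_i)$ then $u_1\oplus u_2$ lifts $\partial(a_1+a_2)$, so that $\alpha((u_1\oplus u_2)^{-1})(u_1\oplus u_2)=\alpha(u_1^{-1})u_1\oplus\alpha(u_2^{-1})u_2$.

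Next I would invoke the Pimsner-Voiculescu Toeplitz extension
$$
0 \longrightarrow A\otimes\mathcal{K} \longrightarrow \mathcal{T}_\alpha \longrightarrow A\rtimes_\alpha\Z \longrightarrow 0,
$$
which realizes $\partial$ as its index map. Pick a self-adjoint lift $x\in M_n(\mathcal{T}_\alpha)$ of $p$ with $0\le x\le 1$; then the unitary $u:=e^{2\pi ix}$ lies in the unitization of $M_n(A\otimes\mathcal{K})$ and represents $\partial([p])\in K_1(A)$. The decisive computation is carried out on the Fock module: if $S$ denotes the generating isometry of $\mathcal{T}_\alpha$ with $SaS^*=\alpha(a)SS^*$ for $a\in A$, then conjugation by $S$ implements $\alpha$ up to the rank-one defect $1-SS^*$ that lies in the ideal $A\otimes\mathcal{K}$. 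Propagating this defect through $e^{2\pi ix}$ produces the unitary $\alpha(u^{-1})u$ on one side and the quantity $\tilde\tau(p)=\tau(E(p))$ on the other. Concretely, one shows that $\alpha(u^{-1})u$ admits a self-adjoint logarithm $ih\in M_n(A)$ whose trace satisfies $\tau(h)\equiv 2\pi\,\tilde\tau(p)\pmod{2\pi\,\rho_A(K_0(A))(\tau)}$.

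The main obstacle is keeping track of the choices — of lift $x$, of logarithm $h$, and of the representative $u$ of $\partial(a)$ in $U_\infty(A)$. Each contributes an ambiguity lying in the group $\rho_A(K_0(A))(\tau)\subset\R$. If $u,u'\in U_\infty(A)$ are two lifts of the same class, then they differ by a unitary in the connected component of the identity, and chasing this difference through the formula expresses the discrepancy as $\exp(2\pi i\,\tau(q))$ for some projection $q\in M_\infty(A)$ produced from a homotopy connecting them. It is precisely the integrality assumption $\rho_A(K_0(A))(\tau)\subset\Z$ that makes all these correction terms disappear under $\exp(2\pi i\cdot)$, yielding the asserted equality of exponentials.
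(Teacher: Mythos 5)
This statement is not proved in the paper at all: it is quoted verbatim as Theorem V.12 of Exel's 1987 paper (\cite{Exel1}) and used as a black box, so there is no internal proof to compare yours against. Judged on its own terms, your outline has the right general flavour (reduce by additivity to a class $[p]$, realize $\partial$ through the Pimsner--Voiculescu Toeplitz extension $0\to A\otimes\mathcal K\to\mathcal T_\alpha\to A\rtimes_\alpha\Z\to 0$, and use integrality only at the very end to kill ambiguities in $\rho_A(K_0(A))(\tau)$), but it contains a genuine gap: the sentence ``propagating this defect through $e^{2\pi ix}$ produces the unitary $\alpha(u^{-1})u$ on one side and the quantity $\tilde\tau(p)=\tau(E(p))$ on the other'' is not a computation, it is a restatement of the theorem. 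The entire content of the result is the identity relating the dual trace $\tilde\tau\circ\rho_{A\rtimes_\alpha\Z}([p])$ (which only sees the zeroth Fourier coefficient $E(p)$) to the de la Harpe--Skandalis determinant of the specific combination $\alpha(u^{-1})u$, and nothing in your sketch explains why that particular combination appears, nor how the lift $x\in M_n(\mathcal T_\alpha)$, which produces a unitary over the unitization of $A\otimes\mathcal K$, gets converted into a representative $u\in U_\infty(A)$ (matrices over $A$ itself) on which $\alpha$ acts and to which $\det_\tau$ applies. That passage from $A\otimes\mathcal K$ back to $U_\infty(A)$, together with the trace bookkeeping along the Fock-module corners, is where the real work lies in Exel's proof (a substantial portion of Sections IV--V of \cite{Exel1}), and it is absent here.

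Two secondary inaccuracies compound this. First, you assert that $\alpha(u^{-1})u$ ``admits a self-adjoint logarithm $ih\in M_n(A)$''; what is actually available is only that $[\alpha(u)]=[u]$ in $K_1(A)$ (since $\partial(a)\in\ker(1-\alpha_*)$ by the PV sequence), so $\alpha(u^{-1})u$ lies in the connected component of the identity after stabilization and $\det_\tau$ must be evaluated along a path (a finite product of exponentials), not via a single logarithm. Second, the well-definedness discussion should use the $\alpha$-invariance of $\tau$ to cancel $\det_\tau(\alpha(w^{-1}))\det_\tau(w)$ when $u'=uw$ with $w$ in the connected component; the mechanism you describe via ``a projection $q$ produced from a homotopy'' is not how the ambiguity arises, although integrality is indeed what makes the genuine ambiguity (choice of path, i.e.\ $\rho_A(K_0(A))(\tau)$) invisible after applying $\exp(2\pi i\,\cdot)$. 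As it stands, the proposal is a plausible plan of attack but not a proof; to complete it you would need to carry out the Toeplitz-extension computation explicitly, which is precisely what the cited theorem of Exel supplies.
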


\begin{lemma}[Lemma 3.3 of \cite{Exel}]\label{trace} Let $\alpha$ be an automorphism of a $C^*$-algebra $A$ and let $\tau_1$ and $\tau_2$ be traces on $A\rtimes_{\alpha} \mathbb{Z}$ such that $\tau_1=\tau_2$ on $A.$ Then $\tau_1\circ \rho_{A\rtimes_{\af}\Z}=\tau_2\circ\rho_{A\rtimes_{\af}\Z} $ on $K_0(A\rtimes_{\alpha} \mathbb{Z}).$
\end{lemma}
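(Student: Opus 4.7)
The strategy is to establish the pointwise identity $\tau(p)=\tau(E(p))$ for every projection $p\in M_n(A\rtimes_{\af}\Z)$ and every trace $\tau$ on $A\rtimes_{\af}\Z$, where $E$ is the canonical conditional expectation from the beginning of Section 3. Granting this, since $E(p)\in M_n(A)$ and $\tau_1|_A=\tau_2|_A$ (extended to $M_n(A)$ in the usual way), one gets $\tau_1(p)=\tau_2(p)$ for every projection, and therefore $\tau_1\circ\rho_{A\rtimes_{\af}\Z}=\tau_2\circ\rho_{A\rtimes_{\af}\Z}$ on $K_0(A\rtimes_{\af}\Z)$.

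The tool I would use is the canonical dual $\T$-action $\gamma\colon\T\to\Aut(A\rtimes_{\af}\Z)$ specified by $\gamma_z(au^k)=z^k au^k$. It is strongly continuous, fixes $A$ pointwise, and its average realizes the expectation: $E(x)=\int_\T\gamma_z(x)\,dz$ as a norm-convergent Bochner integral, as one verifies on finite sums $\sum_k a_k u^k$ and extends by continuity. The same formula holds after amplifying to $M_n$.

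The crux of the argument, and the main step that requires an idea rather than routine checking, is the following homotopy observation: for any projection $p\in M_n(A\rtimes_{\af}\Z)$, the map $z\mapsto\gamma_z(p)$ is a continuous path of projections in $M_n(A\rtimes_{\af}\Z)$, so $[\gamma_z(p)]=[p]$ in $K_0(A\rtimes_{\af}\Z)$ for every $z\in\T$. Consequently every trace $\tau$ satisfies
$$\tau(\gamma_z(p))=(\tau\circ\rho_{A\rtimes_{\af}\Z})([\gamma_z(p)])=(\tau\circ\rho_{A\rtimes_{\af}\Z})([p])=\tau(p)\qquad(z\in\T).$$
Integrating over $\T$ and exchanging $\tau$ with the Bochner integral yields
$$\tau(p)=\int_\T\tau(\gamma_z(p))\,dz=\tau\!\left(\int_\T\gamma_z(p)\,dz\right)=\tau(E(p)),$$
which is the desired identity. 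The remaining pieces --- the identification of $E$ with the dual-action average, the strong continuity of $\gamma$, and the homotopy invariance of the $K_0$-class along continuous paths of projections --- are all standard.
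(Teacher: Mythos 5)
Your argument is correct, and it is essentially the standard proof of this fact: the paper itself only cites it as Lemma 3.3 of Exel's soft-torus paper, whose proof likewise rests on averaging the dual circle action to recover the expectation $E$ and on the observation that $z\mapsto\gamma_z(p)$ is a norm-continuous path of projections, so each trace is constant along it. The only (harmless, in this setting) implicit assumptions are that the traces are bounded, so they pass through the Bochner integral, and that one works in the unital case (or in the unitization), which is all that is needed where the lemma is applied in the paper.
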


Note that if $\ep\in [0,2),$ $\|uv u^*v^*-1\|=\ep<2.$ Then $-1$ is not in the spectrum of $uv u^*v^*.$
Therefore there is a continuous branch of logarithm  defined on the  compact subset
$F_\ep=\{e^{it}: t\in [-\pi+2\arccos(\ep/2),\pi-2\arccos (\ep/2)]\}.$  In what follows, unless otherwise state, we use $\log$ defined on $F_{\ep}.$ Moreover, if $0<\ep_1<\ep,$ we may assume that $\log$ is defined on $F_{\ep}.$

\begin{theorem} \label{Thmexel} Let $\ep\in [0,2),$ $\mathfrak{u}_{\ep},\mathfrak{v}_{\ep}\in U(\mathcal{T}_{\ep})$ be generators of $\mathcal{T}_{\ep},$  then
$$\rho_{\mathcal{T}_{\ep}}(b_{\ep})(\tau)=\frac{1}{2 \pi i}\tau(\log(\mathfrak{u}_{\ep}\mathfrak{v}_{\ep}\mathfrak{u}_{\ep}^*\mathfrak{v}_{\ep}^*))\quad \mbox{for all}\quad \tau\in T(\mathcal{T}_{\ep}).$$
In particular, when $\ep\in [0, \dt_0),$
$$
\rho_{\mathcal{T}_{\ep}}({\rm bott}(\mathfrak{u}_{\ep}, \mathfrak{v}_{\ep}))(\tau)=\frac{1}{2 \pi i}\tau(\log(\mathfrak{u}_{\ep}\mathfrak{v}_{\ep}\mathfrak{u}_{\ep}^*\mathfrak{v}_{\ep}^*))\quad \mbox{for all}\quad  \tau\in T(\mathcal{T}_{\ep}).
$$
\end{theorem}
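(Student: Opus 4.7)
The proof has two stages: first establish the identity modulo $\Z$ using Exel's rotation-number formula (Theorem \ref{rotation}), then pin down the integer ambiguity by a continuity and connectedness argument anchored at one explicit commutative trace.

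For stage one, invoke Proposition \ref{crossed product} to identify $\mathcal{T}_\ep\cong B_\ep\rtimes_{\alpha_\ep}\Z$, so that $\mathfrak{u}_\ep$ corresponds to $x_0$ and $\mathfrak{v}_\ep\mathfrak{u}_\ep\mathfrak{v}_\ep^*$ corresponds to $x_1=\alpha_\ep(x_0)$. Given $\tau\in T(\mathcal{T}_\ep)$, the restriction $\tau|_{B_\ep}$ is automatically $\alpha_\ep$-invariant, and Theorem \ref{homotopy} gives $K_0(B_\ep)\cong K_0(C(\T))=\Z$, so $(B_\ep,\tau|_{B_\ep})$ is integral and Exel's determinant ${\rm det}_{\tau|_{B_\ep}}$ is defined. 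By Definition \ref{dbot} one has $\partial(b_\ep)=[x_0]$ in $K_1(B_\ep)$, so Theorem \ref{rotation} with $a=b_\ep$ and $u=x_0$ yields
$$
\exp\bigl(2\pi i\,\tilde{\tau}(\rho(b_\ep))\bigr)={\rm det}_{\tau|_{B_\ep}}(x_1^{-1}x_0).
$$
Since $\|x_1^{-1}x_0-1\|=\|x_0-x_1\|\le\ep<2$, the spectrum of $x_1^{-1}x_0$ avoids $-1$, so $x_1^{-1}x_0=\exp(iH)$ for some self-adjoint $H\in B_\ep$ with $iH=\log(x_1^{-1}x_0)$; the defining property ${\rm det}_\tau(e^{iH})=e^{i\tau(H)}$ turns the right-hand side into $\exp(\tau(\log(x_1^{-1}x_0)))$. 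Lemma \ref{trace} lets us replace $\tilde{\tau}$ by $\tau$ on $K_0(\mathcal{T}_\ep)$, and the algebraic identity $x_1^{-1}x_0=\mathfrak{u}_\ep^*(\mathfrak{u}_\ep\mathfrak{v}_\ep\mathfrak{u}_\ep^*\mathfrak{v}_\ep^*)\mathfrak{u}_\ep$, combined with $\log(sws^{-1})=s\log(w)s^{-1}$ and traciality, yields $\tau(\log(x_1^{-1}x_0))=\tau(\log(\mathfrak{u}_\ep\mathfrak{v}_\ep\mathfrak{u}_\ep^*\mathfrak{v}_\ep^*))$. Thus
$$
n(\tau):=\rho_{\mathcal{T}_\ep}(b_\ep)(\tau)-\frac{1}{2\pi i}\tau(\log(\mathfrak{u}_\ep\mathfrak{v}_\ep\mathfrak{u}_\ep^*\mathfrak{v}_\ep^*))\in\Z.
$$

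For stage two, observe that both terms defining $n(\tau)$ depend weak-$*$ continuously on $\tau$ (the first is the pairing with the fixed class $b_\ep$, the second is $\tau$ evaluated on a fixed element of $\mathcal{T}_\ep$), so $n$ is continuous, integer-valued, and hence locally constant. Convexity of $T(\mathcal{T}_\ep)$ makes it connected, so $n$ is globally constant. Evaluate at the anchor $\tau_0:=\tau_{\T^2}\circ\varphi_\ep$, where $\tau_{\T^2}$ is Haar integration on $C(\T^2)$: one has $\rho(b_\ep)(\tau_0)=\tau_{\T^2}(\varphi_{\ep_*}(b_\ep))=\tau_{\T^2}(b)=0$ (since the bott class of $C(\T^2)$ is $[e(z,w)]-[e_{11}]$ and both summands trace to $1$), while $\varphi_\ep(\mathfrak{u}_\ep\mathfrak{v}_\ep\mathfrak{u}_\ep^*\mathfrak{v}_\ep^*)=zwz^*w^*=1$ forces $\tau_0(\log(\cdots))=0$. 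Hence $n(\tau_0)=0$, so $n\equiv 0$, which establishes the main formula. The ``in particular'' clause is then immediate from Proposition \ref{Pbott}.

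The main obstacle is the integer ambiguity produced by the exponential in Theorem \ref{rotation}: that theorem only determines $\rho(b_\ep)(\tau)$ modulo $\Z$, and the universal bound $|\rho(b_\ep)(\tau)|\le 2K$ from (\ref{beptrace}) is too crude to force $n=0$ on its own. The rigidity needed is supplied by the convexity, hence connectedness, of $T(\mathcal{T}_\ep)$, together with the commutative anchor $\tau_0$ at which both sides of the identity manifestly vanish.
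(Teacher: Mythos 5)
Your proof is correct, and its first stage (crossed-product identification, Exel's rotation-number formula, the determinant property, and Lemma \ref{trace}, giving the identity modulo $\Z$) is exactly the paper's. Where you genuinely diverge is in killing the integer ambiguity. The paper does this trace-by-trace with a dilution trick: it uses universality to build $\Phi\colon \mathcal{T}_\ep\to M_{m+1}(\mathcal{T}_\ep)$, $a\mapsto \mathrm{diag}(a,\pi_\xi(a),\dots,\pi_\xi(a))$ with $\pi_\xi$ the point evaluation at $(1,1)$ through $\varphi_\ep$, observes that the integer attached to $\tau_0\circ\Phi$ equals $k_\tau/(m+1)$, and invokes the uniform bound $|k_\tau|\le 2K+1$ from (\ref{beptrace}) to force it to vanish as $m\to\infty$. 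You instead note that $n(\tau)$ is a weak-$*$ continuous, integer-valued function on the convex (hence connected) set $T(\mathcal{T}_\ep)$, so it is constant, and you evaluate it at the commutative anchor $\tau_{\T^2}\circ\varphi_\ep$, where $\rho(b_\ep)$ pairs to $0$ (the Bott class is an infinitesimal) and $\varphi_\ep(\mathfrak{u}_\ep\mathfrak{v}_\ep\mathfrak{u}_\ep^*\mathfrak{v}_\ep^*)=1$ gives $\log\mapsto 0$. Both arguments ultimately lean on the commutative quotient $C(\T^2)$; yours is arguably cleaner in that it needs neither the a priori bound (\ref{beptrace}) nor a second appeal to universality, at the mild cost of invoking the topology of the trace space, while the paper's amplification argument is purely algebraic/trace-theoretic and never mentions connectedness. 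The ``in particular'' clause via Proposition \ref{Pbott} is handled the same way in both.
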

\begin{proof} Identify $B_{\ep}$ as a subalgebra of $\mathcal{T}_{\ep}$ under the isomorphism of Proposition \ref{crossed product}.
Let $\tau\in T(\mathcal{T}_{\ep}).$ Then
$\tau$ is given
by restriction an $\alpha_{\ep}$-invariant trace on $B_{\ep}.$ Moreover,  $\tau$ is an integral trace on $B_{\ep}$ since any tracial state is an integral trace on the homotopy class of $C(\mathbb{T})$ by Theorem \ref{homotopy}.
Let $\tilde{\tau}$ be the canonical extension of $\tau|_{B_{\ep}},$
by Theorem \ref{trace} and Theorem \ref{rotation}
we obtain \begin{eqnarray*}
           \exp(2\pi i\tau\circ \rho_{\mathcal{T}_{\ep}}(b_{\ep}))
&=&\exp(2\pi i\tilde{\tau}\circ \rho_{\mathcal{T}_{\ep}}(b_{\ep}))\\
            &=& \mbox{det}_{\tau}(\alpha_{\ep}(\mathfrak{u}_{\ep}^{*})\mathfrak{u}_{\ep}) \\
             &=& \mbox{det}_{\tau}(\alpha_{\ep}(x_0^{*})x_0) \\
             &=& \mbox{det}_{\tau}(x_1^{*} x_0)\\
              &=& \mbox{det}_{\tau}(\exp(\log(x_1^{*} x_0)))\\
              &=&\exp(\tau(\log(x_1^{*} x_0)))\\
              &=&\exp(\tau(\log(\mathfrak{v}_{\ep}\mathfrak{u}_{\ep}^{*}\mathfrak{v}_{\ep}^{*}\mathfrak{u}_{\ep})))\\
              &=&\exp(\tau(\log(\mathfrak{u}_{\ep}\mathfrak{v}_{\ep}\mathfrak{u}_{\ep}^{*}\mathfrak{v}_{\ep}^{*}))).
          \end{eqnarray*}\\
So there is an integer $k_{\tau}\in \Z$ such that
\begin{eqnarray}\label{interger}
\rho_{\mathcal{T}_{\epsilon}}(b_{\epsilon})(\tau)-\frac{1}{2\pi i}\tau(\log(\mathfrak{u}_{\ep}\mathfrak{v}_{\ep}\mathfrak{u}_{\ep}^{*}\mathfrak{v}_{\ep}^{*}))=k_{\tau}.\nonumber
\end{eqnarray}
Note by (\ref{beptrace}),
\begin{eqnarray}\label{N-2}
|k_{\tau}|\le 2K+1\,\,\,{\rm for\,\,\,all} \,\,\,\tau\in T(\mathcal{T}_{\ep}).\nonumber
\end{eqnarray}
Let $u=\mathfrak{u}_{\ep}\oplus I_m$ and $v=\mathfrak{v}_{\ep}\oplus I_m,$ where $I_m$ is the identity of $M_{m}(\mathcal{T}_{\ep}),$ then $\|uv-vu\|\leq\ep.$ Since $\mathcal{T}_{\ep}$ is universal, we have a homomorphism $\Phi: \mathcal{T}_{\ep}\rightarrow M_{m+1}(\mathcal{T}_{\ep})$ such that $\Phi(\mathfrak{u}_{\ep})=u$ and $\Phi(\mathfrak{v}_{\ep})=v.$

Fix $\xi=(1, 1)\in \T\times \T=\T^2.$ Let $P_{\xi}: C(\T^2)\to \C$ be the point-evaluation
defined by $P_{\xi}(f)=f(\xi)$ for all $f\in C(\T^2).$ Denote by $\pi_{\xi}: \mathcal{T}_{\ep}\to \C$ by
$\pi_{\xi}=P_{\xi}\circ \varphi_{\ep},$ where $\varphi_{\ep}: \mathcal{T}_{\ep}\to C(\T^2)$ is defined right before \ref{dbot}.
Note $(\pi_{\xi})_{*0}(b_{\ep})=0.$
Note
\begin{eqnarray}
\Phi(a)=\begin{pmatrix} a & 0 & 0&\cdots\\
                                      0 & \pi_{\xi}(a) & 0&\cdots\\
                                        && \ddots\\
                                       &&& \pi_{\xi}(a)\end{pmatrix}\nonumber
\end{eqnarray}
for all $a\in \mathcal{T}_{\ep}.$

Let $\tau_0\in T(M_{m+1}(\mathcal{T}_{\ep})).$ Then $\tau_0\circ \Phi$ is an  an integral trace on $B_{\ep}$ since any  tracial state is an integral trace on the homotopy class of $C(\mathbb{T})$ by Theorem \ref{homotopy}.
It follows that
\begin{eqnarray}\label{N-3-1}
\tau_0\circ \rho_{M_{m+1}(\mathcal{T}_{\ep})}(\Phi_{*0}(b_{\ep}))-\frac{1}{2\pi i}\tau_0\circ \Phi(\log(\mathfrak{u}_{\ep}\mathfrak{v}_{\ep}\mathfrak{u}_{\ep}^{*}\mathfrak{v}_{\ep}^{*}))=k_{\tau_0\circ \Phi}\in \mathbb{Z}.\nonumber
\eneq
On the other hand, one may write
$\tau_{0}=\frac{1}{m+1}(\tau\oplus\dots\oplus\tau)$ for some
tracial state $\tau\in T(\mathcal{T}_{\ep}).$
We compute that
\begin{eqnarray}\label{N-3}
\tau_0\circ\Phi(\mathfrak{u}_{\ep}\mathfrak{v}_{\ep}\mathfrak{u}_{\ep}^{*}\mathfrak{v}_{\ep}^{*})=\frac{1}{m+1}
\tau(\log(\mathfrak{u}_{\ep}\mathfrak{v}_{\ep}\mathfrak{u}_{\ep}^{*}\mathfrak{v}_{\ep}^{*})).\nonumber
\end{eqnarray}
By the definition of $\Phi$ and the fact that $(\pi_{\xi})_{*0}(b_{\ep})=0,$ we also have
\begin{eqnarray}\label{N-4}
\tau_0\circ \rho_{M_{m+1}(\mathcal{T}_{\ep})}(\Phi_{*0}(b_{\ep}))=\frac{1}{m+1} \tau\circ \rho_{\mathcal{T}_{\ep}}(b_{\ep}).\nonumber
\end{eqnarray}
It follows, by combinning (\ref{beptrace}) that
\begin{eqnarray}\label{N-5}
|k_{\tau_0\circ \Phi}|=|{ k_{\tau}\over{m+1}}|\le {2K+1\over{m+1}}.
\end{eqnarray}
This holds for all integer $m.$  It follows that $k_{\tau_0\circ \Phi}=0.$ Then,  by (\ref{N-5}),
$k_{\tau}=0$ for all $\tau\in T(\mathcal{T}_{\ep}).$ Therefore
$$\rho_{\mathcal{T}_{\ep}}(b_{\ep})(\tau)=\frac{1}{2\pi i}\tau(\log(\mathfrak{u}_{\ep}\mathfrak{v}_{\ep}\mathfrak{u}_{\ep}^{*}\mathfrak{v}_{\ep}^{*}))\quad \mbox{for all}\quad \tau\in T(\mathcal{T}_{\ep}).$$
\end{proof}

\begin{definition}\label{dfakebott}
{\rm
Let $A$ be a unital \CA\, and let $u$ and $v$ be two unitaries in $A$ such that
$\|uv-vu\|\leq\ep<2.$ Denote by $A_{u,v}$ the \SCA\, of $A$ generated by $u$ and $v.$
There is a surjective \hm\, $\phi_{u,v}: \mathcal{T}_{\ep}\to A_{u,v}$ such that $\phi_{u,v}(\mathfrak{u}_{\ep})=u$ and
$\phi_{u,v}(\mathfrak{v}_{\ep})=v.$
Put $b_{u,v}=(\phi_{u,v})_{*0}(b_{\ep}).$
If $\|uv-vu\|<\dt_0,$ then
$$
b_{u,v}={\rm bott}(u,v).
$$
}
\end{definition}

\begin{theorem}(\textbf{Exel trace formula}) Let $A$ be a unital $C^*$-algebra, then for any $u,v\in U(A)$ and $\|uv-vu\|< 2,$ we have
$$\rho_{A}(\imath_{*0}(b_{u,v}))(\tau)=\frac{1}{2\pi i}\tau(\log(uv u^*v^*))\quad \mbox{for all}\quad \tau\in T(A),$$
where $\imath: A_{u,v}\to A$ is the unital embedding. If, in addition, $\|uv-vu\|<\dt_0,$ then
$$
\rho_{A}({\rm bott}(u,v))(\tau)=\frac{1}{2\pi i}\tau(\log(uv u^*v^*))\quad \mbox{for all}\quad \tau\in T(A).$$
\end{theorem}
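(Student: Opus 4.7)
My strategy is pure naturality: transport the formula from the soft torus $\mathcal{T}_{\epsilon}$, where Theorem \ref{Thmexel} has already established it, down to $A_{u,v}$ via the universal surjection, and then push into $A$ along the inclusion $\imath$.

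Concretely, I would choose $\epsilon$ with $\|uv-vu\|\le \epsilon<2$. The universal property of $\mathcal{T}_\epsilon$ supplies a unital surjective \hm\, $\phi_{u,v}:\mathcal{T}_\epsilon\to A_{u,v}$ with $\phi_{u,v}(\mathfrak{u}_\epsilon)=u$ and $\phi_{u,v}(\mathfrak{v}_\epsilon)=v$, and by Definition \ref{dfakebott} one has $b_{u,v}=(\phi_{u,v})_{*0}(b_\epsilon)$. For any $\tau\in T(A)$, form the pulled-back tracial state $\tilde\tau:=\tau\circ \imath\circ\phi_{u,v}$ on $\mathcal{T}_\epsilon$ (it is positive and unital as a composition of unital $*$-homomorphisms with a state, and tracial since $\imath$ and $\phi_{u,v}$ are \hm s). Naturality of the pairing $\rho$ with respect to $*$-homomorphisms then yields
$$\rho_A\bigl(\imath_{*0}(b_{u,v})\bigr)(\tau)=\rho_{\mathcal{T}_\epsilon}(b_\epsilon)(\tilde\tau).$$

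Applying Theorem \ref{Thmexel} to $\tilde\tau$ rewrites the right side as $\frac{1}{2\pi i}\tilde\tau\bigl(\log(\mathfrak{u}_\epsilon\mathfrak{v}_\epsilon\mathfrak{u}_\epsilon^*\mathfrak{v}_\epsilon^*)\bigr)$. To finish, I use that the chosen branch of $\log$ is continuous on $F_\epsilon\subset\T$, while the identity $\|xyx^*y^*-1\|=\|xy-yx\|$ for unitaries $x,y$ forces the spectra of both $\mathfrak{u}_\epsilon\mathfrak{v}_\epsilon\mathfrak{u}_\epsilon^*\mathfrak{v}_\epsilon^*$ and $uvu^*v^*$ into $F_\epsilon$. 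Consequently continuous functional calculus commutes with the $*$-homomorphism $\imath\circ\phi_{u,v}$, giving $(\imath\circ\phi_{u,v})\bigl(\log(\mathfrak{u}_\epsilon\mathfrak{v}_\epsilon\mathfrak{u}_\epsilon^*\mathfrak{v}_\epsilon^*)\bigr)=\log(uvu^*v^*)$, so that $\tilde\tau\bigl(\log(\mathfrak{u}_\epsilon\mathfrak{v}_\epsilon\mathfrak{u}_\epsilon^*\mathfrak{v}_\epsilon^*)\bigr)=\tau(\log(uvu^*v^*))$. Combining with the displayed naturality equation proves the formula, and the additional $\delta_0$-clause is immediate from the second half of Definition \ref{dfakebott}, which identifies $b_{u,v}$ with $\mathrm{bott}(u,v)$.

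There is essentially no new obstacle at this stage: once Theorem \ref{Thmexel} is in hand, the rest is bookkeeping about naturality together with the compatibility of functional calculus with $*$-homomorphisms. The only mild point to verify is that the branch of $\log$ on $F_\epsilon$ covers both spectra, which is automatic from the norm identity above. All of the substantive difficulty is concentrated in Theorem \ref{Thmexel} itself, whose integer-valued error term $k_\tau$ is killed by the amplification trick through $\Phi$ that exploits $(\pi_\xi)_{*0}(b_\epsilon)=0$ together with the a priori bound (\ref{beptrace}).
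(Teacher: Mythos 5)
Your proposal is correct and follows essentially the same route as the paper: pull the trace back along the homomorphism $\mathcal{T}_{\ep}\to A$ given by the universal property (which factors as $\imath\circ\phi_{u,v}$), invoke Theorem \ref{Thmexel} for that pulled-back trace, and transfer back via naturality of $\rho$ and compatibility of the $\log$ functional calculus with $*$-homomorphisms. The extra details you spell out (the norm identity forcing both spectra into $F_\ep$, and the $\dt_0$-clause via Definition \ref{dfakebott}) are exactly the bookkeeping the paper leaves implicit.
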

\begin{proof} Since $\|uv-vu\|=\ep<2,$ there is a unique homomorphism $\phi: \mathcal{T}_{\ep}\rightarrow A$ such that $\phi(\mathfrak{u}_{\ep})=u$
and $\phi(\mathfrak{v}_{\ep})=v,$ then $\tau\circ\phi$ is a tracial state on $\mathcal{T}_{\ep},$ we get

$$\rho_{\mathcal{T}_{\ep}}(b_{\ep})(\tau\circ\phi)=\frac{1}{2\pi i}\tau\circ\phi(\log(\mathfrak{u}_{\ep}\mathfrak{v}_{\ep}\mathfrak{u}_{\ep}^{*}\mathfrak{v}_{\ep}^{*}))\quad \mbox{for all}\quad \tau\in T(A).$$
Note that $\phi(\mathcal{T}_{\ep})=A_{u,v}.$
So
$$\rho_{A}(\imath_{*0}(b_{u,v}))(\tau)=\frac{1}{2\pi i}\tau(\log(uv u^*v^*))\quad \mbox{for all}\quad \tau\in T(A).$$
\end{proof}

\begin{remark}
{\rm
The Exel trace formula was first found for matrix algebras (\cite{Exel}).
It was late proved that the same formula also holds in unital simple \CA s of tracial rank no more than one (Theorem 3.5 of \cite{LinInv}).
}
\end{remark}

\begin{theorem}\label{rep1} Let $A$ be a unital $C^*$-algebra and $u,v\in U(A)$ satisfy the condition $\|uv-vu\|= \delta<2.$ Let $A_{u,v}$ be the $C^*$-subalgebra generated by unitaries  $u$ and $v$ such that
$uvu^*v^*$ commutes with $u$ and $v.$
Suppose that $\theta\in (-1/2, 1/2).$ If $T(A_{u,v})\neq \emptyset$ and ${1\over{2\pi i}}\tau(\log(uvu^*v^*))=\theta$ for all $\tau\in T(A_{u,v}),$ then $A_{u,v}$ is isomorphic to a quotient of $ A_{\theta}.$
Moreover, $uv=e^{2\pi i \theta}vu.$

In particular, if $\theta$ is an irrational number, then $A_{u,v}\cong A_{\theta}.$
\end{theorem}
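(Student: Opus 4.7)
The plan is to reduce everything to proving the identity $uv = e^{2\pi i\theta}vu$ inside $B := A_{u,v}$. Once this is in hand, the universal property of $A_\theta$ furnishes a unital $*$-homomorphism $A_\theta \to B$ sending the canonical generators to $u$ and $v$; this map is surjective because $u,v$ generate $B$, which yields the quotient statement, and in the irrational case $A_\theta$ is simple, so the map is automatically an isomorphism.

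To prove the identity, I would set $w := uvu^*v^*$. Then $\|w - 1\| = \|uv - vu\| = \delta < 2$, so $-1 \notin \sigma(w)$ and $c := \tfrac{1}{2\pi i}\log w$ is a well-defined self-adjoint element with $\sigma(c) \subset (-1/2,1/2)$; by hypothesis $w$ commutes with $u$ and $v$, so $c$ lies in the center $Z(B)$. The given trace condition rewrites as $\tau(c) = \theta$ for every $\tau \in T(B)$, and the whole point is to upgrade this to $\sigma(c) = \{\theta\}$, equivalently $c = \theta\cdot 1$.

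The core step is, for each $s \in \sigma(c)$, to produce a trace $\tau \in T(B)$ with $\tau(c) = s$, since the hypothesis then forces $s = \theta$. Let $J_s$ denote the closed two-sided ideal of $B$ generated by the central self-adjoint element $c - s\cdot 1$; it is proper because $s \in \sigma(c)$. In the quotient $B/J_s$ the image of $c$ is $s\cdot 1$, so $w$ becomes $e^{2\pi i s}$ and hence $uv = e^{2\pi i s}vu$ there, and the universal property of $A_s$ yields a surjective $*$-homomorphism $\phi_s \colon A_s \to B/J_s$. If $s$ is irrational, $A_s$ is simple, so $\phi_s$ is an isomorphism and $B/J_s$ inherits the unique tracial state of $A_s$; if $s$ is rational, then $A_s$ is subhomogeneous (a continuous bundle of matrix algebras over $\mathbb{T}^2$), hence so is the nonzero quotient $B/J_s$, and any nonzero unital subhomogeneous $C^*$-algebra admits a tracial state. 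Pulling such a trace back along the quotient map $B \to B/J_s$ produces the desired $\tau \in T(B)$ with $\tau(c) = s$.

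The main obstacle is exactly this trace-existence step: for a general unital $C^*$-algebra a nonzero quotient need not admit any tracial state, so one must genuinely exploit the specific structure of $A_s$ (simplicity for irrational $s$, subhomogeneity for rational $s$) in order to obtain a trace on $B/J_s$. After this point is handled, $\sigma(c)=\{\theta\}$ gives $c = \theta\cdot 1$ and $w = e^{2\pi i\theta}\cdot 1$, so $uv = e^{2\pi i\theta}vu$, and the quotient (resp.\ isomorphism, in the irrational case) claim follows from the universal property of $A_\theta$ as explained in the first paragraph.
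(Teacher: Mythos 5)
Your argument is correct, and its core strategy coincides with the paper's: for each point $e^{2\pi i s}$ in the spectrum of $w=uvu^*v^*$, localize so that $w$ becomes the scalar $e^{2\pi i s}$, recognize the localized algebra as a (unital, nonzero) quotient of $A_s$, pull a tracial state back to $A_{u,v}$, and let the hypothesis force $s=\theta$, whence $w=e^{2\pi i\theta}$ and $uv=e^{2\pi i\theta}vu$, after which the universal property of $A_\theta$ (plus simplicity in the irrational case) finishes the proof. The difference is in the localization device and in the level of detail at the trace step. The paper works in the enveloping von Neumann algebra $A_{u,v}^{**}$, cuts by the (central, nonzero) spectral projection of $w$ at a point of its spectrum, and thereby maps $A_{u,v}$ onto a quotient of $A_{\theta_j}$; you instead stay inside $B=A_{u,v}$, pass to $c=\tfrac{1}{2\pi i}\log w$ in the center, and quotient by the ideal generated by $c-s$, which is proper precisely because $c-s$ is central and non-invertible. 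Your route avoids the bidual and closed projections altogether, at the small cost of that properness verification. You are also more explicit than the paper about why the localized algebra carries a trace: the paper simply writes $\tau_j\circ\varphi_j\in T(A_{u,v})$ for $\tau_j\in T(A_{\theta_j})$, which, read literally, presupposes that a trace exists on the relevant quotient of $A_{\theta_j}$; your case split (simplicity and the unique trace for irrational $s$, subhomogeneity and hence finite-dimensional representations for rational $s$) supplies exactly the justification that step needs, so on this point your write-up is actually tighter than the original.
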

\begin{proof} Let $\ep=\max\{\delta,|1-e^{2\pi i\theta}|\}.$  Then $2>\ep>0.$
Since $\mathcal{T}_{\ep}$ is a universal $C^*$-algebra, there is a unital surjective  homomorphisms $\phi_{u,v}:\mathcal{T}_{\ep}\rightarrow A_{u,v}$ such that $\phi_{u,v}(\mathfrak{u}_{\ep})=u$ and $\phi_{u,v}(\mathfrak{v}_{\ep})=v.$
Moreover, for any $\theta'\in (-1/2,1/2)$ satisfying $|1-e^{2\pi i\theta'}|\leq\epsilon,$ there is a unital surjective \hm\, $\phi_{\theta'}:\mathcal{T}_{\ep}\rightarrow A_{\theta'}$ such that $\phi_{\theta'}(\mathfrak{u}_{\ep})=u_{\theta'}$ and $\phi_{\theta'}(\mathfrak{v}_{\ep})=v_{\theta'}.$

 Let $w_{\ep}=\mathfrak{u}_{\ep}\mathfrak{v}_{\ep}\mathfrak{u}_{\ep}^*\mathfrak{v}_{\ep}^*$ and $w=uvu^*v^*,$ so $\phi_{u,v}(w_{\ep})=w.$

Suppose that the  spectrum of $w$ has more than one point, say  $e^{2\pi i\theta_1}$ and $e^{2\pi i\theta_2},$ since $\|w-1\|=\|uvu^*v^*-1\|=\|uv-vu\|\leq \ep,$
we have $|1-e^{2\pi i\theta_{j}}|\leq\ep$ for $j=1,2.$

Note that $w$ commutes with $u$ and $v.$ Working in the enveloping von Neumann algebra $A_{u,v}^{**},$ let $p_{\theta_j}\in A_{u,v}^{**}$ be the spectrum
projection of $w$ associated with the point $\{e^{2\pi i\theta_j}\},$ $j=1,2.$
Since $e^{2\pi i\theta_j}$ is in the spectrum of $w,$
$p_{\theta_j}\not=0$ in $A_{u,v}^{**},$ $j=1,2.$
Moreover,  $p_{\theta_j}$ is a closed projection of $A_{u,v}.$
Since  $w$ commutes with $u$ and $v,$ $p_{\theta_j}$ is central.
Define $\varphi_j(a)=ap_{\theta_j}$ for all $a\in A_{u,v},$ $j=1,2.$
Then $\varphi_j(w)=p_{\theta_j}w=e^{2\pi i\theta_j}p_{\theta_j},$ $j=1,2.$
It follows
that
$$
\varphi_j(u)\pi_j(v)=p_{\theta_j}uv=p_{\theta_j}wvu=e^{2\pi i\theta_j} \varphi_j(v)\varphi_j(u),\,\,\, j=1,2.
$$
Thus
$\varphi_j: A_{u,v}\to \varphi_j(A)$ is a unital surjective \hm\, from
$A_{u,v}$ onto a quotient of $A_{\theta_j},$ $j=1,2.$

Let $\tau_j\in T(A_{\theta_{j}}),$ then $\tau_j\circ \varphi_j\in T(A_{u,v}).$
We have
 \begin{eqnarray*}
   & & \frac{1}{2\pi i}(\tau_j\circ \varphi_j(\log(uv u^*v^*))) \\
    &=& \frac{1}{2\pi i}(\tau_j(\log(\varphi_j(u)\varphi_j(v)\varphi_j(u)^*\varphi_j(v)^*)))\\
    &=& \theta_{j}.
 \end{eqnarray*}
By the assumption, $\theta_j=\theta,$ $j=1,2.$
 So the spectrum of $w$ has only one point which is equal to $e^{2\pi i\theta}.$ In other words,
$w=e^{2\pi i \theta}.$  It follows that  $uv=e^{2\pi i\theta}vu.$ Therefore  $A_{u,v}$ is isomorphic to a quotient of $ A_{\theta}.$

 If $\theta$ is an irrational number, it is well known that irrational rotation algebra $A_{\theta}$ is simple, so $A_{u,v}\cong A_{\theta}.$
\end{proof}

\begin{corollary}\label{uniquetrace} Let $A$ be a unital $C^*$-algebra and $u,v\in U(A)$ satisfy the condition $\|uv-vu\|<2$ and
 $uvu^*v^*$ commutes with $u$ and $v.$ Let $A_{u,v}$ be the $C^*$-subalgebra generated by unitaries  $u$ and $v.$ If  $A_{u,v}$ has a unique tracial state, then $A_{u,v}$ is isomorphic to some irrational rotation algebra $A_{\theta}$,
or some matrix algebra $M_{n}.$
\end{corollary}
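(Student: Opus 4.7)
The plan is to feed the unique tracial state $\tau_0$ into Theorem \ref{rep1} and then analyze separately what the quotients of the rotation algebra $A_\theta$ can look like. First I would set $\theta:=\frac{1}{2\pi i}\tau_0(\log(uvu^*v^*))$. Because $\|uv-vu\|<2$ the element $uvu^*v^*$ avoids $-1$ in its spectrum, so $\log$ is defined on a suitable compact arc and $\theta\in(-1/2,1/2)$. Since $\tau_0$ is the only tracial state of $A_{u,v}$, the hypothesis of Theorem \ref{rep1} is automatic, so $uv=e^{2\pi i\theta}vu$ and there is a surjection $A_\theta \twoheadrightarrow A_{u,v}$.

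The irrational case is immediate: if $\theta \notin \Q$, then $A_\theta$ is simple, hence the surjection is an isomorphism, so $A_{u,v}\cong A_\theta$.

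For the rational case, write $\theta=p/q$ in lowest terms, with $q\ge 1$. Then $u^q$ and $v^q$ are central unitaries in $A_{u,v}$ (and in $A_{p/q}$), and it is a standard structural fact that $A_{p/q}$ is a continuous-trace $C^*$-algebra whose center is the $C(\T^2)$ generated by $u_\theta^q, v_\theta^q$, whose primitive ideal space is $\T^2$, and all of whose irreducible representations are isomorphic to $M_q$. Consequently closed two-sided ideals of $A_{p/q}$ are in order-preserving bijection with open subsets of $\T^2$, and every tracial state of $A_{p/q}$ has the form $\tau_\mu(a)=\int_{\T^2}\mathrm{tr}_q(a(x))\,d\mu(x)$ for a unique Borel probability measure $\mu$ on $\T^2$. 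Thus $A_{u,v}\cong A_{p/q}/I$ for some $I$ corresponding to an open set $U\subset\T^2$, and the tracial states of $A_{u,v}$ correspond bijectively to probability measures on $F:=\T^2\setminus U$. Uniqueness of $\tau_0$ forces $F$ to be a single point, so the quotient is a single fiber, i.e.\ $A_{u,v}\cong M_q$. (The case $\theta=0$ gives $q=1$, where $A_0=C(\T^2)$ and the argument degenerates to: a commutative $C^*$-algebra with a unique trace is $\C=M_1$.)

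The only nontrivial step is the rational case, and there the work is not really in the proof but in citing (or verifying) the continuous-trace description of $A_{p/q}$; given that description, the uniqueness of $\tau_0$ immediately collapses $F$ to a point because distinct points of $F$ give rise to distinct point-mass measures and therefore distinct traces. If one wished to avoid quoting the continuous-trace structure, one could work intrinsically with the central subalgebra $C^*(u^q,v^q)\subset Z(A_{u,v})$ and observe that the restriction of $\tau_0$ to this center is the only trace-compatible state, forcing its Gelfand spectrum to be a singleton, after which one recovers $A_{u,v}=M_q$ from the irreducibility of the fibers.
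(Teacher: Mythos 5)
Your proposal is correct and follows essentially the same route as the paper: invoke Theorem \ref{rep1} with the unique trace to get $uv=e^{2\pi i\theta}vu$ and a surjection $A_\theta\twoheadrightarrow A_{u,v}$, use simplicity of $A_\theta$ when $\theta$ is irrational, and in the rational case use the structure of $A_{p/q}$ as a ($q$-homogeneous) algebra over $\T^2$ so that quotients correspond to closed subsets and uniqueness of the trace collapses that subset to a point, giving $M_q$. The only cosmetic difference is that you quote the continuous-trace fiber description of $A_{p/q}$ directly, while the paper reaches the same fact via Rieffel's Morita equivalence, writing $A_\theta\cong P_1M_N(C(\T^2))P_1$ and its quotients as $P_1M_N(C(X))P_1$ for closed $X\subset\T^2$.
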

\begin{proof} Let $\tau$ be the unique tracial state on $A_{u,v}.$
If ${1\over{2\pi i}}\tau(\log(uvu^*v^*))=\theta$ is an irrational number, by Theorem \ref{rep1},
$A_{u,v}\cong A_{\theta}.$

If ${1\over{2\pi i}}\tau(\log(uvu^*v^*))=\theta$ is a rational number, then $A_{u,v}$ is isomorphism to a quotient of rational rotation
algebra $A_{\theta}.$ It follows from
\cite{Rieffel} that $A_{\theta}$ is strongly Morita equivalent to $C(\mathbb{T}^2).$ Therefore $A_{\theta}\otimes \mathcal{K}\cong C(\mathbb{T}^2)\otimes \mathcal{K},$
where $\mathcal{K}$ is the $C^*$-algebra of compact operator on an infinite dimensional separable Hilbert space.
Let $\phi:A_{\theta}\otimes \mathcal{K}\rightarrow C(\mathbb{T}^2)\otimes \mathcal{K}$ denote the isomorphism.
Then  $$A_{\theta}=(1_{A_{\theta}}\otimes e_{11})(A_{\theta}\otimes \mathcal{K})(1_{A_{\theta}}\otimes e_{11})\cong \phi(1_{A_{\theta}}\otimes e_{11}) (C(\mathbb{T}^2)\otimes \mathcal{K} )\phi(1_{A_{\theta}}\otimes e_{11}).$$
Thus  we can find a projection $P_1\in M_N(C(\T^2))$ which is equivalent to  $\phi(1_{A_{\theta}}\otimes e_{11})$ for some $N\in \mathbb{N}.$
So $A_{\theta}\cong P_1M_N(C(\T^2))P_1,$ where $N$ is an integer and $P_1\in M_N(C(\T^2))$ is a projection.
Since each quotient of $P_1M_N(C(\T^2))P_1$ is isomorphic to $P_1M_N(C(X))P_1$ for some closed subset $X\subset \mathbb{T}^2,$ we have  $A_{u,v}\cong P_1M_N(C(X))P_1$ for some closed subset $X\subset \mathbb{T}^2.$ The assumption that  $A_{u,v}$ has an unique tracial state implies  $X$ has only one point.
It follows that   $A_{u,v}\cong M_{n}$ for some
$n\in \mathbb{N}.$

\end{proof}

\section{Stability of irrational rotation in infinite simple \CA s}

Eilers and Loring  (Corollary 7.6 of \cite{EL})
 showed that the answer to ({\bf Q}1) is affirmative
for all rational numbers in $(-1/2,1/2]$ if the class of unital simple \CA s of tracial rank zero
is replaced by the class of all matrix algebras.  As mentioned in the introduction,
their result could not include irrational numbers.
To include irrational numbers, one may replace $M_n,$ a finite dimensional simple \CA\, by a unital infinite dimensional
simple AF-algebra. To make it even more general, we will replace finite dimensional simple \CA s
(matrix algebras) by unital simple \CA s with tracial rank zero.

We would also remark an affirmative answer to ({\bf Q}1)  does not follow from Theorem \ref{rep1} even in the additional assumption
that $uvu^*v^*$ commutes with $u$ and $v.$
Note that, in Theorem \ref{MTirr}  and Theorem \ref{MT2}, $\tau$ are  tracial states on
$A$, while $\tau$ in Theorem \ref{rep1} are all tracial states on $A_{u,v}.$

We recall the definition of tracial (topological) rank of
$C$*-algebras.
\begin{definition}\label{Dtr0} (\cite{LinPLMS}) Let $A$ be a unital simple $C^*$-algebra.
Then $A$ is said to have tracial (topological) rank zero if for any $\varepsilon>0$, any
finite set $\mathcal{F}\subset A$  and any
nonzero positive element $c\in A,$ there exists a finite dimensional
$C^*$-subalgebra $B\subset A$ with $1_{B}=p$ such
that:\\
(\romannumeral1) $\|pa-ap\|<\varepsilon$ for all $a\in \mathcal{F}$,\\
(\romannumeral2) $\dist (pap, B)<\epsilon$ for all $a\in \mathcal{F}$,\\
(\romannumeral3) $1_A-p$ is Murray-von Neumann equivalent to a projection in $\overline{cAc}.$
\end{definition}

If $A$ has tracial rank zero, we write $\mathrm{TR}(A)=0.$

\begin{definition} Let $L : A \rightarrow B$ be a linear map. Let $\delta> 0$ and $\mathcal{G}\subset A$ be a (finite) subset. We say $L$ is
$\mathcal{G}$-$\delta$-multiplicative if
$$\|L(ab)-L(a)L(b)\|<\delta\,\,\, for\,\,\, all\,\,\, a, b \in \mathcal{G}.$$
\end{definition}

We begin with the following lemma which is known.

\begin{lemma}[Lemma 4.1 of \cite{LnTAM}]\label{LsL1}
Let $A$ be a separable unital \CA. For any $\ep>0$ and any finite subset ${\mathcal F}\subset A_{s.a.},$ there exists $\dt>0$ and
a finite subset ${\mathcal G}\subset A_{s.a}$ satisfying the following:
For any ${\mathcal G}$-$\dt$-multiplicative \morp\, $L: A\to B,$ for any
unital \CA\, $B$ with $T(B)\not=\emptyset,$ and any tracial state $t\in T(B),$ there exists a $\tau\in T(A)$ such that
\begin{eqnarray}\label{LsL1-1}
|t\circ L(a)-\tau(a)|<\ep\,\,\,for \,\,\,all \,\,\, a\in {\mathcal F}.\nonumber
\end{eqnarray}
\end{lemma}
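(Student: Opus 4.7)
The plan is a standard compactness-and-contradiction argument, grounded in the fact that the closed unit ball of $A^{*}$ is weak-$*$ compact and metrizable (the latter by separability of $A$), so every norm-bounded sequence of positive linear functionals on $A$ admits a weak-$*$ convergent subsequence. First I would negate the conclusion: suppose it fails for some $\epsilon_0 > 0$ and some finite $\mathcal{F}_0 \subset A_{s.a.}$. Then I would fix an increasing chain of finite subsets $\mathcal{G}_n \subset A_{s.a.}$ with $\mathcal{F}_0 \cup \{1_A\} \subset \mathcal{G}_1$ and $\bigcup_n \mathcal{G}_n$ norm-dense in $A_{s.a.}$. The negation supplies, for each $n$, a unital $C^{*}$-algebra $B_n$ with $T(B_n)\neq\emptyset$, a $\mathcal{G}_n$-$\tfrac{1}{n}$-multiplicative ccp map $L_n\colon A \to B_n$, and a tracial state $t_n \in T(B_n)$, such that no $\tau \in T(A)$ approximates $\phi_n := t_n \circ L_n$ within $\epsilon_0$ on all of $\mathcal{F}_0$.

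Each $\phi_n$ is a positive linear functional on $A$ of norm at most $1$, so by Banach--Alaoglu I pass to a subsequence with $\phi_n \to \phi$ in the weak-$*$ topology, where $\phi \in A^{*}$ is positive. To show $\phi$ is tracial, fix $a, b \in \mathcal{G}_N$ for some $N$; for $n \ge N$, the almost multiplicativity bounds $\|L_n(ab) - L_n(a)L_n(b)\| < \tfrac{1}{n}$ and $\|L_n(ba) - L_n(b)L_n(a)\| < \tfrac{1}{n}$, combined with traciality of $t_n$ giving $t_n(L_n(a)L_n(b)) = t_n(L_n(b)L_n(a))$, yield $|\phi_n(ab) - \phi_n(ba)| < \tfrac{2}{n}$. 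Passing to the weak-$*$ limit, $\phi(ab) = \phi(ba)$ on the dense set $\bigcup_n \mathcal{G}_n$, and norm continuity extends this to all $a, b \in A$, so $\phi$ is a tracial positive functional.

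To conclude, I would pass once more to a subsequence along which $\lambda := \lim_n \phi_n(1_A) \in [0,1]$ exists. In the typical use of this lemma the $L_n$ are (assumed or arranged to be) unital, so $\phi_n(1_A) = 1$, $\lambda = 1$, and $\phi$ is itself a tracial state on $A$; weak-$*$ convergence then yields $|\phi_n(a) - \phi(a)| < \epsilon_0$ for $a \in \mathcal{F}_0$ once $n$ is large, contradicting the construction and closing the argument. The hard part will be this final normalization step for genuinely non-unital ccp maps: without $\phi_n(1_A) \to 1$ the limit $\phi$ need not be a state, and one must use $1_A \in \mathcal{G}_n$ together with the Kadison--Schwarz estimate $|\phi_n(a)|^{2} \le \|a\|^{2}\,\phi_n(1_A)$ either to rescale $\phi$ into a tracial state of $A$ or to dispose of the degenerate case $\phi = 0$ by choosing $\delta$ small enough that $L_n(1_A)$ is close to a full projection in $B_n$ (forced by $\|L_n(1_A)^{2} - L_n(1_A)\| < \tfrac{1}{n}$).
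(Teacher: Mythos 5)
The paper never proves this lemma---it is quoted verbatim as Lemma 4.1 of \cite{LnTAM}---so the only comparison available is with the standard argument from that source, and your proposal is exactly that argument: negate the statement, use separability to pick increasing finite sets ${\mathcal G}_n\subset A_{s.a.}$ with dense union, extract $\mathcal{G}_n$-$\frac{1}{n}$-multiplicative maps $L_n$ and traces $t_n$ witnessing the failure, pass to a weak-$*$ limit $\phi$ of $\phi_n=t_n\circ L_n$ on the (metrizable) unit ball of $A^*$, verify traciality of $\phi$ from almost multiplicativity of $L_n$ together with traciality of $t_n$, and contradict the defining property of $\phi_n$ on the finite set ${\mathcal F}_0$. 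All of that is correct and complete. The one issue is the ``hard part'' you flag at the end, and it should be settled by a remark rather than by the Kadison--Schwarz rescaling you sketch: for genuinely non-unital contractive completely positive maps the statement is literally false, since $L=0$ is ${\mathcal G}$-$\delta$-multiplicative for every ${\mathcal G}$ and $\delta$, yet $t\circ L=0$ cannot be $\ep$-close to any tracial state on ${\mathcal F}=\{1_A\}$; hence no normalization trick can close that case, and trying to ``dispose of'' it via $\|L_n(1_A)^2-L_n(1_A)\|<\frac{1}{n}$ cannot work either, because $L_n(1_A)$ may be (close to) a projection of small or zero trace. The lemma is intended, and in this paper only applied (in the proof of Theorem \ref{MTirr}), for unital almost multiplicative maps; under that reading $\phi_n(1_A)=1$, the weak-$*$ limit $\phi$ is automatically a tracial state, and your argument is finished. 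Equivalently, one may add the hypothesis that $t(L(1_A))$ is close to $1$ and rescale $\phi$ by $\phi(1_A)^{-1}$.
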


Let $\theta\in (-1/2, 1/2)$ be an irrational number and $\ep=|1-e^{2\pi i\theta}|.$
By recalling  \ref{dbot}, we  write
$$
p_\ep=(a_{i,j})_{K\times K},\quad q_{\ep}=(c_{i,j})_{K\times K} \andeqn b_\ep=[p_\ep]-[q_\ep]
$$
where $a_{i,j}, b_{i,j}\in \mathcal{T}_\ep.$

Let $\phi_{\theta}: \mathcal{T}_\ep\to A_{\theta}$ be the \hm\, such that
$\phi_{\theta}(\mathfrak{u}_\ep)=u_\theta$ and $\phi_{\theta}(\mathfrak{v}_\ep)=v_\theta.$ Let $A$
 be a unital $C^*$-algebra and $u,v\in A$ be two unitaries and let  $A_{u,v}$ be the $C^*$-subalgebra of $A$ generated by $u$ and $v.$
If $\|uv-vu\|<\epsilon,$ then there is surjective homomorphism $\phi_{u,v}: \mathcal{T}_\ep\to A_{u,v}$ such that $\phi_{u,v}(\mathfrak{u}_\ep)=u$
and $\phi_{u,v}(\mathfrak{v}_\ep)=v.$

\begin{lemma}\label{Rieffelproj}
Let $\theta\in (-1/2, 1/2)$.
For any $\ep_0>0,$ any $\eta_1>0$  and any finite subset ${\mathcal G}\subset A_{\theta},$ there exists $\dt_{00}>0$ satisfying the following:
For any unital \CA\, $A$ and any pair of unitaries $u, \, v\in A,$ if
\begin{eqnarray}\label{Rie-1}
\|uv-e^{2\pi i\theta} vu\|<\dt_{00},\nonumber
\end{eqnarray}
then there exists a unital ${\mathcal G}$-$\eta_1$-multiplicative \morp\,
$L: A_{\theta}\to A$ such that
\begin{eqnarray}\label{Rie-2}
&&\|L(u_\theta)-u\|<\ep_0,\,\,\,\|L(v_\theta)-v\|<\ep_0\tand\nonumber\\\label{Rie-3}
&&(\imath\circ \phi_{u,v})_{*0}([p_\ep])=[L\circ\phi_{\theta}]([p_\ep])\nonumber\\ \label{Rie-4}
&&(\imath\circ \phi_{u,v})_{*0}([q_\ep])=[L\circ\phi_{\theta}]([q_\ep])\,\,\,{\rm in}\,\,\, K_0(A),\nonumber
\end{eqnarray}
where $\imath: A_{u,v}\to A$ is the unital embedding map.
Moreover, if $\theta=p/q\in (-1/2, 1/2],$ where $p$ and $q$ are non-zero integers with
$(p,q)=1$ and $q >0,$ we may also assume that
\beq\label{Rie-4+}
[L]({\rm bott}(u_\theta^q, v_\theta^q))={\rm bott}(u^q,v^q).
\eneq
\end{lemma}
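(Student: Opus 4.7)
The plan is to argue by contradiction, using a sequence-algebra lift to convert the approximate relation $uv\approx e^{2\pi i\theta}vu$ into an exact one on a quotient and then apply the Choi--Effros lifting theorem. Suppose the lemma fails for some fixed $\ep_0>0$, $\eta_1>0$, and finite $\mathcal G\subset A_\theta$; then there exist unital \CA s $A_n$ and unitary pairs $u_n,v_n\in A_n$ with $\|u_nv_n-e^{2\pi i\theta}v_nu_n\|<1/n$ such that no \morp\ $L_n\colon A_\theta\to A_n$ meets every stated conclusion. Set $B=\prod_n A_n$, $J=\bigoplus_n A_n$, and let $\pi:B\to B/J$ be the quotient map. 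The images $\bar u=\pi((u_n))$ and $\bar v=\pi((v_n))$ are unitaries in $B/J$ satisfying $\bar u\bar v=e^{2\pi i\theta}\bar v\bar u$ \emph{exactly}, so the universal property of $A_\theta$ supplies a unital \hm\ $\bar\phi:A_\theta\to B/J$ with $\bar\phi(u_\theta)=\bar u$, $\bar\phi(v_\theta)=\bar v$. Since $A_\theta\cong C(\mathbb T)\rtimes_{\alpha_\theta}\mathbb Z$ is separable and nuclear, the Choi--Effros lifting theorem supplies a unital \morp\ $L=(L_n):A_\theta\to B$ with $\pi\circ L=\bar\phi$.

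Applying $\pi$ to $L(ab)-L(a)L(b)$ gives $0$, hence $L(ab)-L(a)L(b)\in J$ for every $a,b\in A_\theta$, so $\|L_n(ab)-L_n(a)L_n(b)\|\to 0$; in particular $L_n$ is $\mathcal G$-$\eta_1$-multiplicative for all sufficiently large $n$. The same observation applied to $L(u_\theta)-(u_n)$ and $L(v_\theta)-(v_n)$ forces $\|L_n(u_\theta)-u_n\|\to 0$ and $\|L_n(v_\theta)-v_n\|\to 0$, so both are below $\ep_0$ eventually. The $K_0$-equalities for $p_\ep$ and $q_\ep$ then follow from these approximations together with asymptotic multiplicativity: the entries of $\phi_\theta(p_\ep)$ and $\phi_{u_n,v_n}(p_\ep)$ are the same fixed $*$-polynomial in $(u_\theta,v_\theta)$ respectively $(u_n,v_n)$, so $L_n(\phi_\theta(p_\ep))$ is norm-close to $\phi_{u_n,v_n}(p_\ep)$; once the discrepancy is within the usual projection-perturbation bound the two represent the same class in $K_0(A_n)$, and an identical argument handles $q_\ep$.

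For the rational case $\theta=p/q$, the exact relation in $A_\theta$ gives $u_\theta^q v_\theta^q=v_\theta^q u_\theta^q$, while a telescoping estimate using $(e^{2\pi i\theta})^q=1$ yields $\|u_n^q v_n^q-v_n^q u_n^q\|\le q^2/n\to 0$, so both $\mbox{bott}(u_\theta^q,v_\theta^q)$ and $\mbox{bott}(u_n^q,v_n^q)$ are defined for $n$ large; asymptotic multiplicativity of $L_n$ applied entrywise to $e(u_\theta^q,v_\theta^q)$ versus $e(u_n^q,v_n^q)$, combined with stability of the spectral gap at $1/2$ under small perturbations, yields $[L_n](\mbox{bott}(u_\theta^q,v_\theta^q))=\mbox{bott}(u_n^q,v_n^q)$ eventually. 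Assembling all of this gives an $L_n$ meeting every requirement for sufficiently large $n$, contradicting the failure hypothesis.

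The main obstacle is ensuring asymptotic multiplicativity of the c.p. lift on a set large enough to handle the polynomial entries of $p_\ep$, $q_\ep$, and the bott projections. Fortunately, any Choi--Effros lift of a $*$-homomorphism automatically satisfies $L(ab)-L(a)L(b)\in J$ for \emph{all} $a,b\in A_\theta$, so the convergence is uniform over any finite subset one cares about, and the remaining $K_0$-identifications reduce to the continuity of continuous functional calculus for almost-commuting unitaries.
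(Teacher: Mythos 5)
Your proposal is correct, and its back end coincides with the paper's: once one has a unital ${\mathcal G}$-$\eta_1$-multiplicative \morp\, $L$ with $L(u_\theta),L(v_\theta)$ close to $u,v$ and with $L(\phi_\theta(a_{i,j})),L(\phi_\theta(c_{i,j}))$ entrywise close to $\phi_{u,v}(a_{i,j}),\phi_{u,v}(c_{i,j})$, both you and the paper read off the $K_0$-equalities from a $1/4$-type norm estimate on the $K\times K$ matrices, and both get (\ref{Rie-4+}) from the smallness of $\|L(u_\theta^q)-u^q\|$ and $\|L(v_\theta^q)-v^q\|$ together with the definition of the bott element in \ref{Dbott}. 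Where you genuinely differ is in how $L$ is produced: the paper's proof simply \emph{asserts} the existence of $\dt_{00}$ and of such an $L$ (a weak-stability statement it treats as known), whereas you prove it by contradiction, passing to $\prod_n A_n/\bigoplus_n A_n$, invoking the universal property of $A_\theta$ to get an exact \hm\, $\bar\phi$, and lifting by Choi--Effros using separability and nuclearity of $A_\theta$; asymptotic multiplicativity on all of $A_\theta$ and asymptotic agreement with $u_n,v_n$ then come for free, and the projection and bott comparisons follow for large $n$. Your route buys a complete justification of the step the paper leaves implicit, at the (harmless, since the paper's $\dt_{00}$ is equally non-effective) cost of non-constructiveness. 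Two small repairs: Choi--Effros gives a c.p.c.\ lift that need not be unital, but since $L_n(1)\to 1$ you may correct it for large $n$ (e.g.\ replace $L_n$ by $L_n(1)^{-1/2}L_n(\cdot)L_n(1)^{-1/2}$) to get a unital map as the lemma requires; and the entries of $p_\ep,q_\ep$ are not literally $*$-polynomials in $\mathfrak{u}_\ep,\mathfrak{v}_\ep$, only norm limits of such, so either approximate them by polynomials within a small tolerance (your argument then goes through verbatim) or, more cleanly, observe that $\pi\circ L\circ\phi_\theta$ and the map induced by $(\phi_{u_n,v_n})_n$ are $*$-homomorphisms of $\mathcal{T}_\ep$ agreeing on the generators, hence $\|L_n(\phi_\theta(a))-\phi_{u_n,v_n}(a)\|\to 0$ for every $a\in\mathcal{T}_\ep$, which is all the entrywise closeness you need.
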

\begin{proof}
Let $\eta>0$ be any positive number with $\eta<\ep_0/2$ and let $N\ge 1$ be an integer.
There is $\dt_{00}>0$ such that
if $\|uv-e^{2\pi i\theta}vu\|<\dt_{00},$
then there exist a surjective homomorphism $\phi_{u,v}: \mathcal{T}_\ep\to A_{u,v}$ such that $\phi_{u,v}(\mathfrak{u}_\ep)=u$
and $\phi_{u,v}(\mathfrak{v}_\ep)=v,$ and a ${\mathcal G}$-$\eta_1$-multiplicative \morp\,
$L: A_{\theta}\to A$ such that
\begin{eqnarray}\label{Rie-6}
\|\imath\circ \phi_{u,v}(a_{i,j})-L(\phi_{\theta}(a_{i,j}))\|<1/(4K^2),\nonumber\\\label{Rie-7}
\|\imath\circ \phi_{u,v}(c_{i,j})-L(\phi_{\theta}(c_{i,j}))\|<1/(4K^2)\phantom{..}\nonumber
\end{eqnarray}
for all $i,j\in\{1,2,\dots,K\}.$

Moreover, we may also assume that
\begin{eqnarray}\label{Rie-8}
\|L(u_{\theta})-u\|<\eta/4N<\ep_0\andeqn\|L(v_\theta)-v\|<\eta/4N<\ep_0.\nonumber
\end{eqnarray}
\begin{eqnarray}\label{Rie-8}
\|L(u_{\theta}^N)-L(u_{\theta})^N\|<\eta/4\andeqn\|L(v_\theta^N)-L(v_{\theta})^N\|<\eta/4.
\end{eqnarray}
We then obtain that
\begin{eqnarray}\label{Rie-9}
\|[(\imath\circ \phi_{u,v})\otimes {\rm id}_{M_K}](p_{\ep})-[(L\circ\phi_{\theta})\otimes {\rm id}_{M_K}](p_{\ep})\|<1/4\andeqn\nonumber\\\label{Rie-10}
\|[(\imath\circ \phi_{u,v})\otimes {\rm id}_{M_K}](q_{\ep}))-[(L\circ\phi_{\theta})\otimes {\rm id}_{M_K}](q_{\ep})\|<1/4.\quad\quad\nonumber
\end{eqnarray}
It follows that
\begin{eqnarray}\label{Rie-11}
(\imath\circ \phi_{u,v})_{*0}([p_\ep])=[L\circ\phi_{\theta}]([p_\ep])\andeqn\nonumber
(\imath\circ \phi_{u,v})_{*0}([q_\ep])=[L\circ\phi_{\theta}]([q_\ep]).\nonumber
\end{eqnarray}

In the case that $\theta=p/q$ as described in the lemma, we choose $N=q.$
By (\ref{Rie-8}), we have
\beq\label{Rie-12}
\|L(u_{\theta}^q)-u^q\|<\eta/2\andeqn \|L(v_\theta^q)-v^q\|<\eta/2.\nonumber
\eneq

Therefore, with sufficiently small $\eta,$
by the definition of the bott element in \ref{Dbott},  (\ref{Rie-4+}) also holds.
\end{proof}

\begin{theorem}\label{MTirr}
Let $\theta\in (-1/2, 1/2)$ be an irrational number. For any $\ep>0,$ there exists $\dt>0$ satisfies the following:
For any unital simple infinite dimensional \CA\, $A$  with tracial rank zero and any pair
of unitaries $u,\,v\in A$ such that
\begin{eqnarray}
\|uv-e^{2\pi i\theta} vu\|<\dt\andeqn {1\over{2\pi i}}\tau(\log(uvu^*v^*))=\theta\nonumber
\end{eqnarray}
for all $\tau\in T(A),$ there exists a pair of unitaries
$\tilde{u}, \tilde{v}\in A$ such that
\begin{eqnarray}
\tilde{u}\tilde{v}=e^{2\pi i\theta} \tilde{v}\tilde{u},\,\,\, \|\tilde{u}-u\|<\ep\tand \|\tilde{v}-v\|<\ep.\nonumber
\end{eqnarray}
\end{theorem}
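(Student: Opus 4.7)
The plan is to realize the almost-commuting pair $(u,v)$ as the image, up to small perturbation, of a genuine unital $*$-homomorphism $\phi\colon A_\theta\to A$. The strategy proceeds in three stages: first, construct an approximately multiplicative ``proto-homomorphism'' $L\colon A_\theta\to A$ carrying $(u_\theta,v_\theta)$ close to $(u,v)$ via Lemma \ref{Rieffelproj}; second, use the Exel trace formula together with the tracial hypothesis to show that $L$ carries exactly the $K_0$-and-tracial data that any honest unital $*$-homomorphism into $A$ realizing $(u,v)$ would carry; third, invoke existence and uniqueness theorems for unital $*$-homomorphisms $A_\theta\to A$ to find an honest $\phi$ approximately unitarily equivalent to $L$, and conjugate to obtain $(\tilde u,\tilde v)$.

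Concretely, fix $\ep>0$. Choose a finite set $\mathcal{G}\subset A_\theta$ containing $u_\theta,v_\theta$ and a small $\eta_1>0$ dictated by the uniqueness theorem below so as to guarantee precision $\ep/3$. Lemma \ref{Rieffelproj}, applied with these data, yields $\delta_{00}>0$ such that whenever $\|uv-e^{2\pi i\theta}vu\|<\delta_{00}$, there is a unital $\mathcal{G}$-$\eta_1$-multiplicative contractive completely positive linear map $L\colon A_\theta\to A$ with $\|L(u_\theta)-u\|,\|L(v_\theta)-v\|<\ep/3$ and $[L\circ\phi_\theta]_{*0}(b_\ep)=\imath_{*0}(b_{u,v})$ in $K_0(A)$. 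By the Exel trace formula, the right-hand side pairs with each $\tau\in T(A)$ to give $\frac{1}{2\pi i}\tau(\log(uvu^*v^*))$, which by hypothesis equals $\theta$ --- the value that the unique tracial state of $A_\theta$ assigns to $b_\ep=\mathrm{bott}(u_\theta,v_\theta)$. Since $K_0(A_\theta)=\Z+\theta\Z$ is generated as an abelian group by $[1_{A_\theta}]$ and $b_\ep$, the trace pairing data induced by $L$ coincides with that of any candidate honest $*$-homomorphism $A_\theta\to A$ carrying the same $K$-theoretic invariants.

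Now invoke the Elliott-type existence theorem for unital $*$-homomorphisms from $A_\theta$ --- a unital simple AT-algebra of real rank zero --- into the unital simple $C^*$-algebra $A$ of tracial rank zero: given any compatible pair consisting of a unital $K_*$-homomorphism $K_*(A_\theta)\to K_*(A)$ and an affine continuous map $T(A)\to T(A_\theta)$, there exists a unital $*$-homomorphism realizing them. The approximate data induced by $L$ (with $K_1$-data $[u_\theta]\mapsto[u],\;[v_\theta]\mapsto[v]$, the $K_0$-matching above, and the tracial data furnished by Lemma \ref{LsL1}) satisfies the compatibility, producing an honest $\phi\colon A_\theta\to A$ with $[\phi]=[L]$ in $KL(A_\theta,A)$ and matching trace map. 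Lin's uniqueness theorem then supplies a unitary $w\in A$ with $\|wL(u_\theta)w^*-\phi(u_\theta)\|,\|wL(v_\theta)w^*-\phi(v_\theta)\|<\ep/3$; setting $\tilde u=w^*\phi(u_\theta)w$ and $\tilde v=w^*\phi(v_\theta)w$ gives $\tilde u\tilde v=e^{2\pi i\theta}\tilde v\tilde u$ (since $\phi$ is multiplicative) and, by the triangle inequality, $\|\tilde u-u\|,\|\tilde v-v\|<\ep$. The hardest step is arranging universal parameters $(\mathcal{G},\eta_1)$ depending only on $\ep$ and $\theta$ and independent of $A$: the Exel trace formula is the indispensable bridge turning the pointwise hypothesis on $\tau(\log(uvu^*v^*))$ into the global $K_0$-tracial compatibility needed to invoke the existence theorem, and without it the uniform argument would collapse.
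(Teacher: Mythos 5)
Your proposal is correct and follows essentially the same route as the paper: Lemma \ref{Rieffelproj} produces the approximately multiplicative $L$ matching $(u,v)$ and the bott class, the Exel trace formula plus the trace hypothesis identifies the $K_0$-tracial data (making the induced map on $K_0(A_\theta)=\Z+\Z\theta$ order preserving), an existence theorem (Theorem 5.2 of \cite{LnMZ}) yields an honest unital homomorphism $h$ with the same invariants, and the uniqueness theorem (Theorem 3.2 of \cite{LnMZ}, with the tracial control from Lemma \ref{LsL1}) conjugates $h$ to within $\ep$ of $L$, giving $\tilde u,\tilde v$. The only differences are cosmetic (an $\ep/3$ versus $\ep/2$ bookkeeping and less explicit citation of the specific existence/uniqueness results).
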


\begin{proof}
Let $\ep_0=|1-e^{2\pi i\theta}|<2.$
We will apply Theorem 3.2 of \cite{LnMZ}.
Let $A_{\theta}$ be the irrational rotation algebra generated by
a pair of unitaries $u_{\theta}$ and $v_{\theta}$ such
that $u_{\theta} v_{\theta}=e^{2\pi i\theta} v_{\theta} u_\theta.$  By \cite{ElliottE},  $A_\theta$ is a unital simple $A\T$-algebra of real rank zero  with
$$
(K_0(A_\theta), K_0(A_\theta)_+, [1_{A_\theta}])=(\Z+\Z\theta, (\Z+\Z\theta)_+, 1)
\mbox{ and } K_1(A_\theta)=\Z\oplus \Z.$$

 To apply Theorem 3.2 of \cite{LnMZ}, put $C=A_\theta.$
 Let $\tau$ be the unique tracial state on $C.$
 For each $t\in T(A),$ define
 $\gamma: C_{s.a.}\to \Aff(T(A))$ by
 $\gamma(c)(t)=\tau(c)$ for all $c\in C_{s.a.}$ and all $t\in T(A),$ where $C_{s.a.}$ is the set of all self-adjoint elements of $C.$


Fix $1>\ep>0$ and let ${\mathcal F}=\{1_{A_{\theta}}, u_{\theta}, v_{\theta}\}.$ Let $\eta>0,$ $\dt_0>0$ (in place of $\dt$),  ${\mathcal G}_1\subset C$ (in place of ${\mathcal G}$) be a finite subset, ${\mathcal H}\subset C_{s.a.}$ be a finite subset and
${\mathcal P}\subset \underline{K}(C)$ be a finite subset required by
Theorem 3.2 of \cite{LnMZ} for $\ep/2$ (in place of $\ep$)  and ${\mathcal F}$ given.

Note that $\tau\circ \rho_{C}( b_{u_{\theta},v_{\theta}})=\theta.$
Therefore $K_0(C)$ is generated by $[1_C]$ and $b_{u_{\theta},v_{\theta}}.$
Thus, we may assume, without loss of generality, that
${\mathcal P}=\{[1_C], b_{u_{\theta},v_{\theta}}, [u_\theta], [v_\theta]\}.$

It follows from Lemma \ref{LsL1} that there exists  a finite subset ${\mathcal H}_1\subset C_{s.a.}$ and $\dt_2>0$ satisfying the following:
For any ${\mathcal H}_1$-$\dt_2$-multiplicative \morp\, $L: C\to A,$ for any
unital \CA\, $A$ with $T(A)\not=\emptyset$, and any tracial state $t\in T(A),$ we have
\begin{eqnarray}\label{MTp-1}
|t\circ L(c)-\tau(c)|<\eta\,\,\,{\rm for\,\,\, all}\,\,\, c\in {\mathcal H}.
\end{eqnarray}

Let ${\mathcal G}_2={\mathcal H}_1\cup {\mathcal G}_1$ and let $\dt_3=\min\{\dt_1, \dt_2\}.$
Choose $1>\dt>0$  such that there is a ${\mathcal G}_2$-$\dt_3$-multiplicative \morp\, $L: C\to A$ (for any unital \CA\, $A$) such that
\begin{eqnarray}\label{MTp-2}
\|L(u_\theta)-u\|<\ep/2\andeqn \|L(v_\theta)-v\|<\ep/2
\end{eqnarray}
for any pair of unitaries $u$ and $v$ in $A$ with
$\|uv-e^{2\pi i\theta}vu\|<\dt.$
Furthermore, by Lemma \ref{Rieffelproj}, we may also assume, by choosing even smaller $\dt,$  that
\begin{eqnarray}\label{MTp-2+1}
[L\circ\phi_{\theta}]([p_{\ep_0}])=(\imath\circ \phi_{u,v})_{*0}([p_{\ep_0}])\andeqn\\\label{MTp-2+2}
[L\circ\phi_{\theta}]([q_{\ep_0}])=(\imath\circ \phi_{u,v})_{*0}([q_{\ep_0}]).\quad\quad
\end{eqnarray}

Now suppose that $A$ is a unital simple \CA\, with tracial rank zero and
let $u, \, v\in A$ be two unitaries such that
\begin{eqnarray}\label{MTp-3}
\|uv-e^{2\pi i\theta}vu\|<\dt\andeqn  {1\over{2\pi i}}t(\log(uv u^*v^*))=\theta\nonumber
\end{eqnarray}
for all $t\in T(A).$
Therefore there exists a ${\mathcal G}_2$-$\dt_3$-multiplicative \morp\,
$L: C\to A$ such that (\ref{MTp-2}), (\ref{MTp-2+1}) and (\ref{MTp-2+2}) hold. Moreover, by the choices of $\dt$ and ${\mathcal G},$
\begin{eqnarray}\label{MTp-5}
|t\circ L(c)-\tau(c)|<\eta\,\,\,{\rm for\,\,\,all}\,\,\, c\in {\mathcal H} \mbox{ and } t\in T(A).\nonumber
\end{eqnarray}

It follows from (\ref{MTp-2+1}) and (\ref{MTp-2+2}) that
\beq\label{MTp-3}
[L](b_{u_{\theta},v_{\theta}})=(\imath\circ \phi_{u,v})_{*0}(b_{\ep_0}).\nonumber
\end{eqnarray}
Thus, by the Exel trace formula of Theorem \ref{Thmexel},
\begin{eqnarray}\label{MTp-4}
&&\rho_A([L](b_{u_{\theta},v_{\theta}}))(t)=\rho_A((\imath\circ \phi_{u,v})_{*0}(b_{\ep_0}))(t)\nonumber\\\label{MTp-5}
&=&\rho_A(\imath_{*0}(b_{u,v}))(t)=\frac{1}{2\pi i}t(\log(uv u^*v^*))=\theta
\end{eqnarray}
for all $t\in T(A).$
Define $\kappa: \Z+\Z\theta\to K_0(A)$ by
$\kappa([1])=[1_A]$ and $\kappa(\theta)=\imath_{*0}(b_{u,v}).$
Since $t(\imath_{*0}(b_{u,v}))=\theta=\tau(b_{u_\theta, v_\theta})$ for all $t\in T(A),$
$\kappa$ is  an order preserving \hm.
Now by Theorem 5.2 of \cite{LnMZ} we have a unital \hm\, $h: A_\theta\to A$
such that
\begin{eqnarray}\label{MTp-6}
&&h_{*0}=\kappa\quad\quad\quad\\
\label{MTp-7}
&&h_{*1}([u_\theta])=[u]\andeqn h_{*1}([v_{\theta}])=[v].
\end{eqnarray}
It follows from (\ref{MTp-6}) and  (\ref{MTp-7})  that
\begin{eqnarray}\label{MTp-8}\nonumber
[h]|_{\mathcal P}=[L]|_{\mathcal P}.
\end{eqnarray}
Moreover, by (\ref{MTp-5}),
\begin{eqnarray}\label{MTp-9}
|t\circ L(c)-\gamma(c)(t)|<\eta\andeqn |t\circ h(c)-\gamma(c)(t)|<\eta\nonumber
\end{eqnarray}
for all $c\in {\mathcal H}$ and all $t\in T(A).$
It follows from Theorem 3.2 of \cite{LnMZ} that there exists a unitary
$W\in A$ such that
\begin{eqnarray}\label{MTp-10}
\|W^*h(u_\theta)W-L(u_{\theta})\|<\ep/2\andeqn \|W^*h(v_{\theta})W-L(v_{\theta})\|<\ep/2.
\end{eqnarray}
Let
\beq
\tilde{u}=W^*h(u_{\theta})W\andeqn \tilde{v}=W^*h(v_\theta)W.\nonumber
\eneq
Then, since $h$ is a \hm,
\begin{eqnarray}\label{MTp-11}
\tilde{u}\tilde{v}=e^{2\pi i \theta}\tilde{v}\tilde{u}.\nonumber
\end{eqnarray}
By (\ref{MTp-10}) and (\ref{MTp-2}),
\beq
\|\tilde{u}-u\|<\ep\andeqn \|\tilde{v}-v\|<\ep.\nonumber
\eneq

\end{proof}

\begin{remark}

{\rm
A version of Theorem \ref{MTirr} also holds in unital amenable purely infinite simple $C^*$-algebras
(see \cite{LinTAM04}).
}
\end{remark}

\section{Stability of rational rotation in infinite simple \CA s}

Now we consider the case that $\theta$ is a rational number.

Recall that the rational rotation $C^*$-algebra associated with the rational number $\theta$ is the universal $C^*$-algebra $A_\theta$ generated by a pair $u_{\theta},v_{\theta}$ of unitaries with $u_{\theta}v_{\theta}=e^{2\pi i \theta} v_{\theta}u_{\theta},$
where $\theta$ is a rational number. When $\theta=0,$ then $A_0\cong C(\T^2).$ If $\theta\not=0,$
write  $\theta=\pm p\slash q$ with $p,q$ coprime and $0< 2p\leq q.$
Let $\lambda=e^{2\pi i\theta},$ define  $q\times q$ matrices

\beq \label{S1S2} S_1=\left(
                             \begin{array}{ccccc}
                               1 &  &  &  &  \\
                               & \lambda^1 &  &  &  \\
                                &  &\lambda^2 &  &  \\
                                &  &  & \ddots &  \\
                                &  &  &  & \lambda^{q-1} \\
                             \end{array}
                           \right)\quad\mbox{and}\quad
S_2=\left(
  \begin{array}{ccccc}
    0 &  &  &  & 1 \\
    1 & 0 &  &  &  \\
      & 1 & 0 &  &  \\
     &  & \ddots & \ddots &  \\
     &  &  & 1 & 0 \\
  \end{array}
\right)
\eneq
Then
$$
S_1S_2=e^{2\pi i \theta}S_2S_1.
$$
By the universal property, there is a unital \hm\,
$\pi^{(0)}: A_{\theta}\to M_q$ such that
$\pi^{(0)}(u_\theta)=S_1$ and $\pi^{(0)}(v_\theta)=S_2.$
Since $S_1$ and $S_2$ generate $M_q,$ this gives an irreducible representation
of $A_\theta.$ Fix a pair of complex numbers $(t_1, t_2)\in \mathbb{T}^2$ and choose
a pair of $q$-th roots $(r_1, r_2)\in \mathbb{T}^2$ such that $r_1^q=t_1$ and $r_2^q=t_2.$
Define an automorphism $\af_{r_1, r_2}: A_\theta\to A_\theta$ such that
$\af_{r_1, r_2}(u_\theta)=r_1u_\theta$ and $\af_{r_1, r_2}(v_\theta)=r_2v_\theta.$
Then $\pi^{(0)}\circ \af_{r_1, r_2}$ also gives an irreducible
representation.  It is easy to verify that, if $(r_1', r_2')\in \T^2$ and
$(r_1')^q=t_1$ and $(r_2')^q=t_2,$ then $\pi^{(0)}\circ \af_{r_1, r_2}$ and
$\pi^{(0)}\circ \af_{r_1', r_2'}$ are unitarily equivalent (by considering  permutations
of  the $q$-th roots).  In particular, they have the same kernel $I_{t_1, t_2}.$
Note that
\beq\label{Rat-1}
&&\pi^{(0)}\circ \af_{r_1, r_2}(u_\theta^q)=t_1\cdot 1_{M_q}=\pi^{(0)}\circ \af_{r_1', r_2'}(u_\theta^q)\nonumber
\eneq
and
\beq
&&\pi^{(0)}\circ \af_{r_1, r_2}(v_\theta^q)=t_2\cdot 1_{M_q}=\pi^{(0)}\circ \af_{r_1', r_2'}(v_\theta^q).\nonumber
\eneq
Therefore, if $(t_1, t_2)\not=(t_1',t_2')$ in $\T^2,$ then
$I_{t_1, t_2}\not=I_{t_1', t_2'}.$  In particular, they are not unitarily equivalent.

The following lemma will be used in this paper. It is certainly known to many experts and should follow from \cite{Rieffel} and
discussion in \cite{CEY} and \cite{Boca}. To clarify the matter, we include here.

\begin{lemma} \label{rep} Let $\theta=p/q\in (-1\slash 2,1\slash 2]$ be a non-zero  rational number,
 where $p$ and $q$ are two integers, $p\not=0,$ $q>0$ and
$(p,q)=1.$ Then there exist an integer $N$ and a projection $P\in M_N(C(\T^2))$
(of rank $q$) and
an isomorphism $H: A_{\theta}\rightarrow PM_N(C(\T^2))P$ such that ${\pi}_{\xi}\circ H(u_{\theta}^{q})=t_1P(t_1,t_2)$ and ${\pi}_{\xi}\circ H(v_{\theta}^q)=t_2P(t_1,t_2)$ for any $\xi=(t_1,t_2)\in \mathbb{T}^2,$ where $\pi_{\xi}$ is the evaluation map at $\xi.$
\end{lemma}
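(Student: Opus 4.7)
The plan is to combine Rieffel's Morita equivalence (as invoked in the proof of Corollary~3.10) with an identification of the center of $A_\theta$ and a change of coordinates on the base $\T^2$.

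First, I would identify the center. Since $\theta=p/q$, direct computation gives $u_\theta^q v_\theta=e^{2\pi i p}v_\theta u_\theta^q=v_\theta u_\theta^q$, and symmetrically for $v_\theta^q$, so $U:=u_\theta^q$ and $V:=v_\theta^q$ lie in $Z(A_\theta)$. They generate a unital commutative \SCA\, isomorphic to $C(\T^2)$ via $U\mapsto t_1$, $V\mapsto t_2$. The irreducible representations $\pi^{(0)}\circ\af_{r_1,r_2}$ constructed just before the lemma send $U\mapsto t_1$ and $V\mapsto t_2$, which is the explicit parameterization of $\widehat{A_\theta}$ by $\T^2$ under which we shall work.

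Next I would invoke Rieffel's theorem in the form already used in the proof of Corollary~3.10: $A_\theta$ is strongly Morita equivalent to $C(\T^2)$, so $A_\theta\otimes\mathcal{K}\cong C(\T^2)\otimes\mathcal{K}$, and cutting by $1_{A_\theta}\otimes e_{11}$ produces an integer $N$, a projection $P_0\in M_N(C(\T^2))$, and an isomorphism $H_0:A_\theta\to P_0M_N(C(\T^2))P_0$. Connectedness of $\T^2$ makes $\operatorname{rank}(P_0)$ constant, and since an irreducible representation of $P_0M_N(C(\T^2))P_0$ at $\xi$ has dimension $\operatorname{rank}(P_0(\xi))$, the first step forces $\operatorname{rank}(P_0)=q$.

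It remains to adjust $H_0$ so that $H(U)=t_1 P$ and $H(V)=t_2 P$. Because $U,V\in Z(A_\theta)$, we have $H_0(U)=\psi_1 P_0$ and $H_0(V)=\psi_2 P_0$ for unitaries $\psi_1,\psi_2\in C(\T^2)$. Comparing the two parameterizations of $\widehat{A_\theta}$ by $\T^2$ (the abstract one from Step~1 and the one coming from evaluation on $P_0M_N(C(\T^2))P_0$ via $H_0$) shows that $\varphi:=(\psi_1,\psi_2):\T^2\to\T^2$ is a homeomorphism. Let $\Psi:M_N(C(\T^2))\to M_N(C(\T^2))$ be the automorphism induced entry-wise by $\varphi^{-1}$, set $P:=\Psi(P_0)$ and $H:=\Psi\circ H_0$. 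Then $H(U)=(\psi_1\circ\varphi^{-1})P=t_1 P$ and similarly $H(V)=t_2 P$, as required.

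The main obstacle is the third step: one has to keep the two copies of $\T^2$ straight (the abstract spectrum of $A_\theta$ parameterized by the scalars $(t_1,t_2)$ on which $U,V$ act, versus the base of the target $C(\T^2)$ on which $P$ lives) and verify carefully that $\varphi$ is a genuine homeomorphism rather than merely a continuous surjection. Everything else is routine bookkeeping with $\Psi$ once the center and the Morita equivalence are in hand.
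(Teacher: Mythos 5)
Your overall route is the same as the paper's: invoke Rieffel's Morita equivalence to realize $A_\theta$ as a corner $P_1M_N(C(\T^2))P_1$, note that $u_\theta^q$ and $v_\theta^q$ are central so their images are $\psi_1P_1$ and $\psi_2P_1$ for circle-valued $\psi_1,\psi_2\in C(\T^2)$, and then correct by the automorphism of the corner induced by $\varphi=(\psi_1,\psi_2)$; in the paper the homeomorphism $\sigma$ plays exactly the role of your $\varphi$, and your final conjugation/reparameterization step and the rank-$q$ observation match the paper's.

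The genuine gap is the step you yourself call the main obstacle: that $\varphi$ is a homeomorphism. Injectivity of $\varphi$ is not a bookkeeping issue about keeping the two copies of $\T^2$ straight; it is equivalent to the statement that the scalars by which $u_\theta^q$ and $v_\theta^q$ act separate the primitive ideals of $A_\theta$, i.e.\ that $C^*(u_\theta^q,v_\theta^q)$ is \emph{all} of $Z(A_\theta)$ (by Stone--Weierstrass, $f\mapsto f\circ\varphi$ maps $C(\T^2)$ onto the center $C(\T^2)P_1$ of the corner if and only if $\varphi$ is injective). Your Step 1 proves only the containment $C^*(u_\theta^q,v_\theta^q)\subset Z(A_\theta)$ and exhibits the family $\pi^{(0)}\circ\af_{r_1,r_2}$; it does not show that every irreducible representation with central character $(t_1,t_2)$ has kernel $I_{t_1,t_2}$, which is what your ``abstract parameterization of $\widehat{A_\theta}$ by $\T^2$'' presupposes. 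That is precisely the content the paper supplies: for an arbitrary irreducible $\pi$ it forms the spectral projections $E_j$ of $\pi(u_\theta)$, builds matrix units $E_{ij}=(\overline{r_2}\pi(v_\theta))^{i-j}E_j$, concludes $\pi(A_\theta)\cong M_q$ with $\pi$ unitarily equivalent to $\pi^{(0)}\circ\af_{r_1,r_2}$ and kernel $I_{t_1,t_2}$, and only then checks continuity of $\sigma$ and uses compactness of $\T^2$ to upgrade the bijection to a homeomorphism. Without this classification (or an independent proof that the center equals $C^*(u_\theta^q,v_\theta^q)$), your ``comparison of parameterizations'' does not rule out $\varphi(\xi)=\varphi(\xi')$ for $\xi\neq\xi'$: a continuous surjection of $\T^2$ onto itself need not be injective, so the argument as written does not close.
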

\begin{proof}
It follows from
\cite{Rieffel} and the proof of Corollary \ref{uniquetrace} that $A_{\theta}\cong P_1M_N(C(\T^2))P_1,$ where $N$ is an integer and $P_1\in M_N(C(\T^2))$ is a projection.
Let $\psi$ denote the isomorphism.


Let  $\pi$ be any irreducible representation of $A_{\theta}$. Let $\lambda=e^{2\pi i\theta}.$  Since $u_{\theta}^qv_{\theta}=\lambda^qv_{\theta}u_{\theta}^q=v_{\theta}u_{\theta}^q.$
Then $u_{\theta}^q$ lies in the center of $A_{\theta}$ and  $\pi(u_{\theta})^q$ lies in the center of $\pi(A_{\theta}),$
whence  $\pi(u_{\theta})^q$ is a scalar. Similarly, $\pi(v_{\theta})^q$ is also a scalar.
Let  $t_1,t_2\in \mathbb{T}$
such  that $\pi(u_{\theta}^q)= t_1 I$ and $\pi(v_{\theta}^q)=t_2 I.$
Thus $\pi(u_\theta)$ has possible eigenvalues  $r_1\lambda^j$ for $0\leq j<q$
and for some $r_1\in \T$ such that $r_1^q=t_1.$
Let $E_j=E_{\pi{(u_{\theta})}}(r_1\lambda^j)$ be the corresponding spectral projections in $\pi(A_\theta)$
(which we do not know that they are non-zero at moment).
We may write
$$\pi(u_{\theta})=\sum_{j=0}^{q-1}r_1\lambda^jE_j.$$
Since $$\sum_{k=0}^{q-1}(\lambda^l)^{k}=0$$ for all $l\in\{1,\dots,q-1\},$
we obtain  that $$E_i=\frac{1}{q}\sum_{k=0}^{q-1}(r_1 \lambda^i)^{-k}\pi(u_{\theta})^k.$$

Therefore, for $0\leq i< q,$
\beq \pi(v_{\theta})E_i&=&\frac{1}{q}\sum_{k=0}^{q-1}(r_1\lambda^i)^{-k}\pi(v_{\theta})\pi(u_{\theta})^k\nonumber\\
&=& \frac{1}{q}\sum_{k=0}^{q-1}(r_1\lambda^i)^{-k}\lambda^{-k}\pi(u_{\theta})^k\pi(v_{\theta})\nonumber\\
&=& \frac{1}{q}\sum_{k=0}^{q-1}(r_1\lambda^{i+1})^{-k}\pi(u_{\theta})^k\pi(v_{\theta})\nonumber\\
&=& E_{i+1}\pi(v_{\theta}).\nonumber
\eneq

Let $r_2\in \T$ such that $r_2^q=t_2.$  We then verify  that  $E_{ij}=(\overline{r_2}\pi(v_{\theta}))^{i-j}E_j$  are partial isometries for $0\leq i,j<q.$
 Since
 $(\overline{r_2}\pi(v_{\theta}))^{q}=I,$  it is easy to verify that these form a set of matrix
units for  $M_q.$ Moreover,
\beq \pi(v_{\theta})=\sum_{j=0}^{q-1}\pi(v_{\theta})E_j=r_2\sum_{j=0}^{q-1}E_{j+1,j},\nonumber
\eneq
where we interpret $E_{q,q-1}$ as $E_{0,q}.$
Hence $C^*(\pi(u_{\theta}),\pi(v_{\theta}))$ is isomorphic to $M_q.$
It follows that  $E_i$ are all one dimensional.

We have just proved that $\pi={\rm Ad}\, U\circ \pi^{(0)}\circ \af_{r_1, r_2}$ for some unitary $U\in M_q$ and with the primitive ideal
space $I_{t_1,t_2}.$


Let $\xi\in \mathbb{T}^2.$ Define $\pi_{\xi}(a)=a(\xi)$
for all $a\in P_1M_N(C(\T^2))P_1.$
Then $\pi_{\xi}\circ \psi$ gives  an irreducible representation of $A_{\theta}.$
From what we have proved,
there is $(t(\xi)_{1},t(\xi)_{2})\in \mathbb{T}^2$ such that $\pi_\xi$ has the kernel
$I_{t(\xi)_{1},t(\xi)_{2}}.$
We will show that the map $\sigma: \xi\to (t(\xi)_{1},t(\xi)_{2})$ is a homeomorphism. From
what described before this lemma, we know that $\sigma$ is injective and, from
what we proved above, it is also surjective. Since $\T^2$ is a compact Huasdorff space,
it suffices to show that $\sigma$ is continuous.

For that, we assume that $\xi_n\to \xi_0$ in $\T^2.$
Then
\beq\label{Rat-2}
&&\|{ \pi}_{\xi_n}(\psi(u_\theta^q))-{ \pi}_{\xi_0}(\psi(u_\theta^q))\|=\|t(\xi_n)_{1}\cdot 1_{M_q}-t(\xi_0)_1 \cdot 1_{M_q}\|\to 0\andeqn\nonumber\\
&&\|{ \pi}_{\xi_n}(\psi(v_\theta^q))-{ \pi}_{\xi_0}(\psi(v_\theta^q))\|=\|t(\xi_n)_{2}\cdot 1_{M_q}-t(\xi_0)_2 \cdot 1_{M_q}\|\to 0.\nonumber
\eneq
Therefore
\beq\label{Rat-3}
|t(\xi_n)_1-t(\xi_0)_1|\to 0\andeqn |t(\xi_n)_2-t(\xi_0)_2|\to 0.\nonumber
\eneq
This proves that $\sigma$ is continuous. Therefore $\sigma$ is a homeomorphism.
Define $\psi_1: P_1M_N(C(\T^2)P_1\to P_1M_N(C(\T^2)P_1$ by
$\psi_1(f)(x)=f(\sigma^{-1}(x))$ for all $x\in \T^2$ and all $f\in P_1M_N(C(\T^2))P_1,$
then $\psi_1$ is an isomorphism.
Put $H=\psi_1\circ \psi.$
Let $(t_1,t_2)\in \mathbb{T}^2$ and $y=\sigma^{-1}((t_1,t_2)),$ i.e.,
$(t(y)_1, t(y)_2)=(t_1, t_2).$
Then
\beq\label{rep-n+1}
\pi_{(t_1, t_2)}\circ H(u_\theta^q) &=&
\pi_{(t_1, t_2)}\circ \psi_1\circ \psi(u_\theta^q)\nonumber\\
&=& \pi_y\circ \psi(u_\theta^q)=t_1\cdot 1_{M_q}\nonumber
\eneq
and
\beq
\pi_{(t_1, t_2)}\circ H(v_\theta^q) &=&
\pi_{(t_1, t_2)}\circ \psi_1\circ \psi(v_\theta^q)\nonumber\\
&=& \pi_y\circ \psi(v_\theta^q)=t_2\cdot 1_{M_q}.\nonumber
\eneq
Put $P=\psi_1(P_1)$ and the lemma follows.


\end{proof}





Following is a concequence of Theorem 2.7 of \cite{LinTAM99}.

\begin{lemma}\label{ML2}
Let $P\in M_N(C(\T^2))$ be a projection of rank $q$ (for some integer $N>q$) and let
$C=PM_N(C(\T^2))P.$
For any  $\ep>0$ and finite subset ${\cal F}\subset C,$ there exists $\dt(\ep)>0$ and finite subset ${\cal G}(\ep)$ satisfies the following:
For any unital simple infinite dimensional \CA\, $A$ with real rank zero and stable rank one,
if  $L: C\to A$ is a \morp\, which is  ${\cal G}$-$\dt$-multiplicative
and
\beq\label{ML2-1}\nonumber
[L]|_{{\rm ker\rho_C}}\subset {\rm ker}\rho_A,
\eneq
then there exists a unital \hm\, $\phi: C\to A$ such that
\beq\label{ML2-2}\nonumber
\|L(f)-\phi(f)\|<\ep\tforal f\in {\cal F}.
\eneq
\end{lemma}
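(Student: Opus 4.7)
The plan is to reduce this lemma to a direct application of Theorem 2.7 of \cite{LinTAM99}, which is an existence result producing honest \hm s from suitable subhomogeneous \CA s into unital simple \CA s of real rank zero and stable rank one, starting from almost multiplicative maps carrying the appropriate K-theoretic compatibility.

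First I would analyze the algebra $C = PM_N(C(\T^2))P$. Since $P$ has constant rank $q$ on the connected space $\T^2$, the algebra $C$ is a unital continuous-trace \CA\, with spectrum $\T^2$; stably $C\otimes \mathcal{K}\cong C(\T^2)\otimes \mathcal{K}$, so $K_0(C)\cong \Z\oplus \Z$ and $K_1(C)\cong \Z\oplus \Z$. Every tracial state on $C$ has the form $\tau_\mu(a)=\int \frac{1}{q}\,\mathrm{tr}_{M_q}(a(x))\,d\mu(x)$ for a Borel probability measure $\mu$ on $\T^2$. A direct computation then identifies $\ker\rho_C$ with the $\Z$-summand of $K_0(C)$ generated by the bott-type element, i.e.\ with the infinitesimal subgroup of $K_0(C)$.

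Next, I would transfer the hypothesis $[L]|_{\ker\rho_C}\subset \ker\rho_A$ into the full compatibility form demanded by Theorem 2.7 of \cite{LinTAM99}. By shrinking $\dt$ and enlarging ${\cal G}$ we can arrange that $L$ is sufficiently multiplicative that $[L]$ is a well-defined group map on a prescribed finite subset $\mathcal{P}\subset \underline{K}(C)$ large enough to feed the cited theorem. Since $K_*(C)\cong K_*(C(\T^2))$ is torsion-free, the Bockstein sequences split and no extra information is encoded in the $\Z/n$-coefficient parts of $\underline{K}(C)$; hence the $\Z$-coefficient condition $[L](\ker\rho_C)\subset \ker\rho_A$ automatically upgrades to the $\underline{K}$-level compatibility on $\mathcal{P}$. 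Tracial compatibility, in the weak form required by the theorem, likewise follows from near-multiplicativity together with the fact that $T(C)$ is the whole probability simplex on $\T^2$.

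With the compatibility in place, Theorem 2.7 of \cite{LinTAM99} applies and supplies a unital \hm\, $\phi: C\to A$ with $\|L(f)-\phi(f)\|<\ep$ for all $f\in {\cal F}$. The main obstacle is precisely this translation step: one has to be careful that the hypothesis phrased only via $\ker \rho_C$ is strong enough to match the hypothesis of Lin's theorem, which is generally formulated in $\underline{K}$-level language. For $C=PM_N(C(\T^2))P$ the reduction is clean because of the torsion-freeness noted above, but it is the one place where the specific structure of the spectrum $\T^2$ is genuinely used.
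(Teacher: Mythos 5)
Your proposal has a genuine gap: it treats Theorem 2.7 of \cite{LinTAM99} as if it applied directly to the corner $C=PM_N(C(\T^2))P$ (you call it a result for ``suitable subhomogeneous'' algebras), and it never explains how the hypothesis $[L]|_{\ker\rho_C}\subset\ker\rho_A$ is converted into the hypothesis that theorem actually requires. These are exactly the two nontrivial steps of the paper's argument, and neither is supplied by your torsion-freeness/Bockstein remark. First, the cited stability theorem is invoked in the paper only for $C(\T^2)$ (and hence trivially for $M_n(C(\T^2))$); its hypothesis is not merely that $[L]$ is well defined on a finite piece of $\underline{K}(C)$, but that the relevant $KL$-data is realized by an honest unital \hm. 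The paper bridges this using Li's realization theorem \cite{Li}: an element $\af\in KL(C(\T^2),A)$ comes from a unital \hm\ precisely when $\af([1_C])=[1_A]$ and $\af(\ker\rho_C)\subset\ker\rho_A$, which is where the $\ker\rho$ hypothesis enters. Saying that torsion-freeness of $K_*(C)$ lets the $\Z$-coefficient condition ``upgrade to $\underline{K}$-level compatibility'' conflates well-definedness of the map on total K-theory with realizability of that map by a homomorphism; the latter is the substantive point and is what \cite{Li} provides. (Your aside about tracial compatibility is also not needed here; no such hypothesis appears in this lemma.)

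Second, even granting the $C(\T^2)$ case, passing to the corner $PM_N(C(\T^2))P$ is not automatic and occupies most of the paper's proof: one picks a rank-one projection $e\in M_{N_1}(C)$ with $eM_{N_1}(C)e\cong C(\T^2)$, amplifies $L$ to $L\otimes{\rm id}$, perturbs it to a unital almost multiplicative map out of $C(\T^2)$, applies the already-established case to $M_{N_2}(C(\T^2))$, and then cuts back down by a projection $Q$ unitarily equivalent to $P$ (approximating the conjugating unitary $W$ by a unitary near $(L\otimes{\rm id})(W)$) to produce a \hm\ on $C$ itself that is close to $L$ on ${\cal F}$. Without either quoting a version of the stability theorem proved for homogeneous algebras of this form or carrying out such a reduction, your argument simply asserts the conclusion for $C$. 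Your structural observations about $C$ (continuous trace, stable isomorphism with $C(\T^2)\otimes{\cal K}$, identification of $\ker\rho_C$ with the Bott summand) are correct and consistent with the paper, but they are preparatory; the realization step via \cite{Li} and the corner reduction must be added for the proof to be complete.
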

\begin{proof}
First consider the case that $C=C(\T^2).$ Note that $K_0(C)=\Z\oplus \Z$ with ${\rm ker}\rho_C=\Z$ (which may be identified with the second
copy of $\Z$) and $K_1(C)=\Z\oplus \Z.$  Thus $KL(C, A)\cong Hom(K_0(C), K_0(A))\oplus Hom(K_1(C), K_1(A)).$
Let $\af\in KL(C,A).$ Then,
it follows from \cite{Li} that there is a unital \hm\, $h: C\to A$ such that
$[h]=\af$ if and only if
$\af([1_C])=[1_A]$ and $\af({\rm ker}\rho_C)\subset {\rm ker}\rho_A.$
Thus this theorem follows immediately from  Theorem 2.7 of \cite{LinTAM99} when $C=C(\T^2).$
It is then clear that theorem holds in the case that $C=M_n(C(\T^2))$ for any integer $n\ge 1.$

For the general case, we note that
there exists an integer $N_1\ge 1$ and a  rank one projection $e\in M_{N_1}(C)$ such that
$eM_{N_1}(C)e\cong C(\T^2).$ Therefore there is a projection $Q\in M_{N_2}(C(\T^2))\subset M_{N_1N_2}(C)$
for some integer $N_2,$
and there is a unitary $W\in M_{N_2N_1}(C)$ such that
$W^*QW=P.$  Define $L_1=L\otimes {\rm id}_{M_{N_1}}: M_{N_1}(C)=C\otimes M_{N_1}\to A\otimes M_{N_1}.$
Let $\ep_1>0$ be given. If $L$ is a ${\cal G}$-$\dt$-multiplicative \morp\, with  sufficiently small $\dt$ and  sufficiently large  ${\cal G},$
then there exists a unitary $V\in A\otimes M_{N_1N_2}$ such that
\beq\label{ML-n1}
\|(L\otimes {\rm id}_{M_{N_1N_2}})(W)-V\|<\ep_1.\nonumber
\eneq
Then
 $L_2=L_1|_{eM_{N_1}(C)e}$ is  close to a unital \morp\, $L_3$ from ${eM_{N_1}(C)e}\cong C(\T^2)$
into $EM_{N_1}(A)E$ for some projection $E\in M_{N_1}(A)$ which is close to $L_2(e),$ whenever
$\dt$ is sufficiently small and ${\cal G}$ is sufficiently large.
Put $B=M_{N_2}(eM_{N_1}(C(\T^2))e).$ Then $B\cong M_{N_2}(C(\T^2)).$
Define $L_4=L_3\otimes {\rm id}_{M_{N_2}}: B\to M_{N_1}(A)\otimes M_{N_2}.$
If $L_4$ is close a unital \hm, say $\psi: B\to M_{N_1}(A)\otimes M_{N_2},$
then $L_4|_{QBQ}$ is close to $\psi|_{QBQ}.$  Note that, there is a unitary $V_1\in A\otimes M_{N_1N_2}$
which is close to $1_{M_{N_1N_2}}$ such that $V_1^*(V^*\psi(Q)V)V_1=L(Q)=1_A.$
Therefore $L$ is close to ${\rm Ad}\, (VV_1)\circ \psi|_{QBQ}.$  Therefore the general case cne be reduced to the case
that $C=M_n(C(\T^2)$ for some integer $n.$

\end{proof}



\begin{theorem}\label{MT2}
Let $\theta\in (-1/2, 1/2)$ be a rational number.
Then, for any $\ep>0,$ there exists $\dt>0$ satisfying the following:
For any unital simple   \CA\, $A$  with real rank zero and stable rank one and for any pair of unitaries
$u$ and $v$ in $A$ such that
\beq\label{MT2-1}
\|uv-e^{2\pi i\theta} vu\|<\dt\tand {1\over{2\pi i}}\tau(\log(uvu^*v^*))=\theta,\nonumber
\eneq
for all $\tau\in T(A),$
then there exists a pair of unitaries ${\tilde u}, {\tilde v}\in A$ such that
\beq\label{MT2-2}\nonumber
{\tilde u}{\tilde v}=e^{2\pi i\theta} {\tilde v}{\tilde u}\tand \|u-{\tilde u}\|<\ep\andeqn \|v-{\tilde v}\|<\ep.
\eneq
\end{theorem}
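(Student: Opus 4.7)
The plan is to parallel the strategy of Theorem~\ref{MTirr}, replacing the inductive-limit classification used there with Lemma~\ref{ML2}, which is tailored to the subhomogeneous structure of $A_\theta$ when $\theta=p/q$ is rational. By Lemma~\ref{rep} there is an isomorphism $H:A_\theta\to C$ where $C=PM_N(C(\mathbb{T}^2))P$ for a projection $P$ of rank $q$. The strategy is to build an approximately multiplicative \morp{} $L:A_\theta\to A$ with $L(u_\theta)\approx u$ and $L(v_\theta)\approx v$, verify the $K$-theoretic hypothesis of Lemma~\ref{ML2}, then apply Lemma~\ref{ML2} to replace $L$ by a genuine unital \hm{} $\phi:A_\theta\to A$ that is uniformly close to $L$ on $\{u_\theta,v_\theta\}$, and finally set $\tilde u=\phi(u_\theta)$, $\tilde v=\phi(v_\theta)$.

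Quantitatively: given $\ep>0$, I would first invoke Lemma~\ref{ML2} (applied to $C$, the constant $\ep/2$, and the finite subset $\{u_\theta,v_\theta\}\subset C$) to produce $\dt(\ep/2)>0$ and $\mathcal{G}(\ep/2)\subset C$. Next I would apply Lemma~\ref{Rieffelproj} with $\ep_0=\ep/2$, $\eta_1=\dt(\ep/2)$, and $\mathcal{G}=\mathcal{G}(\ep/2)\cup\{u_\theta,v_\theta,u_\theta^q,v_\theta^q\}$ to obtain $\dt_{00}>0$; the final $\dt$ is chosen so that $\dt<\dt_{00}$ and also $\|u^q v^q-v^q u^q\|<\dt_0$ (the bott-element threshold of Definition~\ref{Dbott}). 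Under the hypothesis, Lemma~\ref{Rieffelproj} produces a $\mathcal{G}$-$\dt(\ep/2)$-multiplicative \morp{} $L$ with $\|L(u_\theta)-u\|<\ep/2$, $\|L(v_\theta)-v\|<\ep/2$, and, by \eqref{Rie-4+}, $[L]({\rm bott}(u_\theta^q,v_\theta^q))={\rm bott}(u^q,v^q)$ in $K_0(A)$.

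The main obstacle is verifying the $K$-theoretic condition $[L]|_{\ker\rho_C}\subset\ker\rho_A$. Since $K_0(C)\cong K_0(C(\mathbb{T}^2))=\mathbb{Z}\oplus\mathbb{Z}$ is torsion-free, $[1_C]$ has nonzero trace, and the bott element $b$ of the central $C(\mathbb{T}^2)$-subalgebra generated (under $H$) by $u_\theta^q$ and $v_\theta^q$ has zero trace, one gets $\ker\rho_C=\mathbb{Z}\cdot b$ with $b={\rm bott}(u_\theta^q,v_\theta^q)$. It therefore suffices to show $\rho_A({\rm bott}(u^q,v^q))=0$ in $\Aff(T(A))$; by the Exel trace formula this reduces to $\tau(\log(u^q v^q u^{-q} v^{-q}))=0$ for every $\tau\in T(A)$. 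The key point is that $e^{2\pi i q\theta}=e^{2\pi i p}=1$, so the exact rotation relation would give $u^q v^q=v^q u^q$; the approximate version places $u^q v^q u^{-q} v^{-q}$ within $O(q^2\dt)$ of $1$, and one combines this with the exact hypothesis $\frac{1}{2\pi i}\tau(\log(uvu^*v^*))=\theta$ via the additivity of $\tau\circ\log$ on unitaries near $1$ (i.e.\ the de la Harpe--Skandalis determinant applied to the commutator $[u^q,v^q]$) to force the trace of the log to vanish exactly. This is the delicate step where the exactness of the hypothesis, rather than mere smallness, is essential.

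With the $K$-theoretic hypothesis verified, Lemma~\ref{ML2} produces a unital \hm{} $\phi:C\to A$ satisfying $\|\phi(u_\theta)-L(u_\theta)\|<\ep/2$ and $\|\phi(v_\theta)-L(v_\theta)\|<\ep/2$. Setting $\tilde u:=\phi(u_\theta)$ and $\tilde v:=\phi(v_\theta)$, the defining relation $u_\theta v_\theta=e^{2\pi i\theta}v_\theta u_\theta$ in $A_\theta$ is transported by the \hm{} $\phi$ to $\tilde u\tilde v=e^{2\pi i\theta}\tilde v\tilde u$, while the triangle inequality yields $\|\tilde u-u\|<\ep$ and $\|\tilde v-v\|<\ep$, completing the proof.
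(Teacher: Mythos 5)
Your architecture matches the paper's: Lemma \ref{rep} to realize $A_\theta$ as $PM_N(C(\mathbb{T}^2))P$ with $\ker\rho_{A_\theta}$ generated by ${\rm bott}(u_\theta^q,v_\theta^q)$, Lemma \ref{Rieffelproj} to produce $L$ with $[L]({\rm bott}(u_\theta^q,v_\theta^q))={\rm bott}(u^q,v^q)$, the Exel trace formula to verify $[L]|_{\ker\rho_{A_\theta}}\subset\ker\rho_A$, and Lemma \ref{ML2} to perturb $L$ to a homomorphism. The gap is exactly at the step you yourself call delicate: proving $\tau(\log(u^qv^qu^{-q}v^{-q}))=0$ for all $\tau\in T(A)$. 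The tool you name, additivity of $\tau\circ\log$ via the de la Harpe--Skandalis determinant, cannot force exact vanishing: that determinant is a homomorphism only into $\mathbb{R}/\overline{\rho_A(K_0(A))(\tau)}$, and for the algebras in this theorem $\rho_A(K_0(A))(\tau)$ contains $\mathbb{Z}$ and is typically dense in $\mathbb{R}$ (already for a UHF algebra), unlike the matrix case where it is $\frac{1}{n}\mathbb{Z}$. So the determinant only tells you that $\frac{1}{2\pi i}\tau(\log(u^qv^qu^{-q}v^{-q}))$ lies in $\rho_A(K_0(A))(\tau)$, which is automatic from the Exel formula and carries no information; smallness plus this congruence does not yield $0$. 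Moreover, the unitary controlled by the hypothesis, $uvu^*v^*$, is close to $e^{2\pi i\theta}$ rather than to $1$, so even an exact additivity statement for unitaries near $1$ does not apply to it directly: to run your route one must first write $u^qv^qu^{-q}v^{-q}$ as a product of $q^2$ conjugates $c_l(uvu^*v^*)c_l^*$, pull the central scalar $e^{2\pi i\theta}$ out of each factor using $e^{2\pi i q^2\theta}=1$, and then prove an exact (not merely mod-$\rho_A(K_0(A))(\tau)$) additivity lemma for $\tau\circ\log$ on products of unitaries uniformly close to $1$, which needs a homotopy-of-paths argument. None of this appears in your write-up, and it is the entire content of the step.

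The paper sidesteps this with a different device: put $U=u\otimes S_2$ and $V=v\otimes S_1$ in $M_q(A)$, where $S_1,S_2$ are the canonical $q\times q$ matrices with $S_1S_2=e^{2\pi i\theta}S_2S_1$. Then $U,V$ genuinely almost commute, the Exel trace formula applies to ${\rm bott}(U,V)$, and $(\tau\otimes{\rm Tr})(\log(UVU^*V^*))$ is computed exactly because $S_2S_1S_2^*S_1^*=e^{-2\pi i\theta}1_{M_q}$ is central, giving $q\theta-q\theta=0$ straight from the hypothesis; then ${\rm bott}(U^q,V^q)=q^2\,{\rm bott}(U,V)$ together with $U^q=u^q\otimes 1$, $V^q=v^q\otimes 1$ yields $q\,{\rm bott}(u^q,v^q)\in\ker\rho_A$ and hence ${\rm bott}(u^q,v^q)\in\ker\rho_A$. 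You should either adopt an argument of this kind or carry out the commutator-identity route in full detail; as written the crux is asserted, not proved. Two smaller omissions: Lemma \ref{ML2} is stated only for infinite dimensional $A$, so the finite dimensional case permitted by the statement needs a separate citation (the paper invokes Corollary 7.6 of \cite{EL}), and $\theta=0$, where Lemma \ref{rep} does not apply, should be handled directly with $C=C(\mathbb{T}^2)$.
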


\begin{proof}
For the sub-class of simple finite dimensional \CA s, the theorem follows from Corollary 7.6 of \cite{EL}. In what follows
we will assume that $A$ is infinite dimensional.

The statement for $\theta=0$ follows from Corollary 2.11 of \cite{LinTAM99} immediately, or from
Lemma \ref{ML2}.
So, for the rest of the proof, we may assume that
 $\theta=\pm p/q,$ where $p$ and $q$ are non-zero integers with $(p,q)=1,$ $0<2p<q.$
By Lemma \ref{rep}, we may write $A_\theta=PM_N(C(\T^2))P,$ where $N$ is an integer and $P\in M_N(C(\T^2))$
is a projection of rank $q.$  Moreover, $\pi_\xi(u_\theta^q)=t_1\circ P_\xi$ and $\pi_\xi(v_\theta^q)=t_2P_\xi$
for all $\xi=(t_1, t_2)\in \T^2.$  Therefore
${\rm ker}\rho_{A_{\theta}}$ is generated by a single element
${\rm bott}(u_\theta^q, v_\theta^q).$
Let $\ep>0$ and let ${\cal F}=\{u_\theta, v_\theta, 1_{A_\theta}\}.$
Let $\dt_1>0$ (in place of $\dt(\ep)$) be a positive number and  ${\cal G}_1\subset A_\theta$ (in place of ${\cal G}(\ep)$) be finite subset required by
Lemma \ref{ML2} for $C=A_{\theta}=PM_N(C(\T^2))P,$ $\ep/2$ and ${\cal F}.$

Let $\dt_{00}$ be required by Lemma \ref{Rieffelproj} for $\ep_0=\min\{\dt_1, \ep/2\}$ and ${\cal G}_1$
(in place of ${\cal G}).$
Let $$\dt=\min\{\dt_{00}/2q^2, \dt_0/2q^2, 1/2q^2\},$$  where $\dt_0$ is defined in Proposition \ref{Pbott}.
Suppose that $A$ is a unital simple \CA\, of real rank zero and stable rank one and suppose that
$u, \, v\in A$ are two unitaries such that
\beq\label{MT2-10}\nonumber
\|uv-e^{2\pi i\theta}vu\|<\dt\andeqn {1\over{2\pi i}}\tau(\log(uvu^*v^*))=\theta.
\eneq
It follows from Lemma \ref{Rieffelproj} that there exists a unital $\dt_1$-${\cal G}_1$-multiplicative
\morp\,
$L: A_\theta\to A$ such that
\beq\label{MT2-11}
\|L(u_\theta)-u\|<\ep/2,\,\, \|L(v_\theta)-v\|<\ep/2\andeqn [L] ({\rm bott}({u_{\theta}^q, v_{\theta}^q}))
={\rm bott}(u^q, v^q).
\eneq
Let $S_1,\,S_2\in M_q$ be as in (\ref{S1S2}). Put $U=u\otimes S_2$ and $V=v\otimes S_1$ in $A\otimes M_q.$
We compute that
\beq\label{MT2-12}\nonumber
UV=uv\otimes S_2S_1\approx_{\dt} e^{2\pi i\theta} vu\otimes (e^{-2\pi i\theta})S_1S_2=VU.
\eneq

Denote
\beq
Z=(u\otimes 1_{M_q})(v\otimes 1_{M_q})(u^*\otimes 1_{M_q} )(v^*\otimes 1_{M_q}).\nonumber
\eneq

Then \begin{eqnarray}\label{MT2-1+2}
&& {1\over{2\pi i}}(\tau\otimes \mbox{Tr})(\log(UVU^*V^*))\nonumber\\
&=&{1\over{2\pi i}}(\tau\otimes \mbox{Tr})(\log((u\otimes S_2)(v\otimes S_1)(u^*\otimes S_2^*)(v^*\otimes S_1^*)))\nonumber\\
&=&{1\over{2\pi i}}(\tau\otimes \mbox{Tr})(\log(Z
(1_A\otimes S_2)(1_A\otimes S_1)
(1_A\otimes S_2^*)(1_A\otimes S_1^*)))\nonumber\\\label{MT2-10n}
&=&{1\over{2\pi i}}(\tau\otimes \mbox{Tr})(\log(Z\cdot e^{-2\pi i\theta} \cdot 1_{M_{q}(A)})),
\eneq
for all $\tau\in T(A).$
Since $e^{-2\pi i\theta} \cdot 1_{M_{q}(A)}$ is in the center
of $M_{q}(A),$ (\ref{MT2-10n}) equals
\beq\label{MT2-10n+1}\nonumber
{1\over{2\pi i}}(\tau\otimes \mbox{Tr})(\log Z)-q\theta
={1\over{2\pi i}}(\tau\otimes \mbox{Tr})(\log(uvu^*v^*)\otimes 1_{M_q}))-q\theta=q\theta-q\theta=0.
\eneq
By the Exel trace formula,  we conclude that
\beq\label{MT-14}
{\rm bott}(U,V)\in {\rm ker}\rho_A.
\eneq
It follows that
\beq\label{MT-15}\nonumber
{\rm bott}(U^q, V^q)=q^2 {\rm bott}(U,V)\in {\rm ker}\rho_A.
\eneq
Note that $U^q=u^q\otimes 1_{M_q}$ and $V^q=v^q\otimes 1_{M_q}.$
It follows that
\beq\label{MT2-16}\nonumber
q{\rm bott}(u^q, v^q)={\rm bott}(U^q, V^q)\in {\rm ker}\rho_A.
\eneq
This implies that, for all $\tau\in T(A),$
\beq\label{MT2-17}\nonumber
q\tau({\rm bott}(u^q, v^q))=0
\eneq
which implies that
\beq\label{MT2-18}
{\rm bott}(u^q, v^q)\in {\rm ker}\rho_A.
\eneq
It follows from (\ref{MT2-18}) and (\ref{MT2-11}) that
\beq\label{MT2-19}\nonumber
[L]({\rm bott}(u_\theta^q, v_\theta^q))={\rm bott}(u^q, v^q)\in {\rm ker}\rho_A.
\eneq
Consequently,
\beq\label{MT2-20}\nonumber
[L]|_{{\rm ker}\rho_{A_\theta}}\subset {\rm ker}\rho_A.
\eneq
By applying  Lemma \ref{ML2}, we obtain  a unital \hm\, $\phi: A_\theta\to A$ such that
\beq\label{MT2-21}
\|L(u_\theta)-\phi(u_\theta)\|<\ep/2\andeqn \|L(v_\theta)-\phi(v_\theta)\|<\ep/2.
\eneq
Put ${\tilde u}=\phi(u_\theta)$ and ${\tilde v}=\phi(v_\theta).$  Note that, since $\phi$ is a unital \hm,
\beq\label{MT2-22}\nonumber
{\tilde u}{\tilde v}=e^{2\pi i\theta} {\tilde v}{\tilde u}.
\eneq
We also have, by (\ref{MT2-21}) and (\ref{MT2-11}),
\beq\label{MT2-23}\nonumber
\|{\tilde u}-u\|<\ep\andeqn \|{\tilde v}-v\|<\ep.
\eneq
\end{proof}

Next we consider that $\theta=\frac{1}{2}.$

\begin{theorem}\label{MT3}
For any $1>\ep>0,$ there exists $\delta>0$ satisfying the following:
For any unital simple infinite dimensional \CA\, $A$  with real rank zero and stable rank one and for any pair of unitaries
$u$ and $v$ in $A$ such that
\beq\label{MT3-1}\nonumber
\|uv+vu\|<\delta\tand\quad\\\nonumber
 \frac{1}{2\pi i}\tau({\log}_0(uvu^*v^*))=1/2
\eneq
for all $\tau\in T(A),$ where $\log_0$ is a continuous logarithm defined on a compact subset $F$ of $\{e^{it}: t\in (0, 2\pi )\}$  with values in $\{ri: r\in (0, 2\pi)\},$
then there exists a pair of unitaries $\tilde{u}, \tilde{v}\in A$ such that
\begin{eqnarray}\label{MT3-2}\nonumber
\tilde{u}\tilde{v}=- \tilde{v}\tilde{u},\quad\quad\quad\\\nonumber
\|u-\tilde{u}\|<\ep\andeqn \|v-\tilde{v}\|<\ep.
\end{eqnarray}
\end{theorem}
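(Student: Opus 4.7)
The plan is to follow the proof of Theorem \ref{MT2} essentially verbatim with $q=2$, the only new ingredient being a reconciliation between the branch $\log_0$ (taking values in $i(0,2\pi)$) and the standard branch of logarithm (centred at $0$) used implicitly there. By Lemma \ref{rep} with $q=2$, $A_{1/2}\cong PM_N(C(\T^2))P$ for a rank-$2$ projection $P$, and $\ker\rho_{A_{1/2}}$ is generated by the single element ${\rm bott}(u_{1/2}^2, v_{1/2}^2)$. Given $\ep>0$, choose $\dt_1>0$ and ${\cal G}_1\subset A_{1/2}$ as required by Lemma \ref{ML2} for $C=A_{1/2}$, $\ep/2$, and ${\cal F}=\{u_{1/2},v_{1/2},1\}$. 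Taking $\dt$ small enough, Lemma \ref{Rieffelproj} produces a ${\cal G}_1$-$\dt_1$-multiplicative \morp\, $L:A_{1/2}\to A$ with $\|L(u_{1/2})-u\|<\ep/2$, $\|L(v_{1/2})-v\|<\ep/2$, and $[L]({\rm bott}(u_{1/2}^2, v_{1/2}^2))={\rm bott}(u^2, v^2)$.

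The core computation is to show ${\rm bott}(u^2, v^2)\in \ker\rho_A$. Let $S_1=\mathrm{diag}(1,-1)$ and $S_2$ be the $2\times 2$ cyclic permutation matrix, so that $S_1S_2=-S_2S_1$, $S_1^2=S_2^2=I$, and $S_2S_1S_2^*S_1^*=-I$. Put $U=u\otimes S_2$ and $V=v\otimes S_1$ in $M_2(A)$. Then $UVU^*V^*=-(uvu^*v^*)\otimes I$, which is close to $1$ when $\dt$ is small, so the standard branch of logarithm applies; also $U^2=u^2\otimes I$ and $V^2=v^2\otimes I$. The scalar identity $\log(-e^{i\phi})=i(\phi-\pi)=\log_0(e^{i\phi})-i\pi$ for $\phi\in(0,2\pi)$ lifts through continuous functional calculus to
\beq
\log(UVU^*V^*)=(\log_0(uvu^*v^*)-i\pi)\otimes I.\nonumber
\eneq
Applying $\tau\otimes\Tr$ and using the hypothesis $\frac{1}{2\pi i}\tau(\log_0(uvu^*v^*))=1/2$ yields
\beq
\tfrac{1}{2\pi i}(\tau\otimes\Tr)(\log(UVU^*V^*))=2\bigl(\tfrac12-\tfrac12\bigr)=0.\nonumber
\eneq
By the Exel trace formula, ${\rm bott}(U,V)\in \ker\rho_{M_2(A)}$, so ${\rm bott}(U^2,V^2)=4\,{\rm bott}(U,V)\in \ker\rho$. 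On the other hand ${\rm bott}(U^2,V^2)=2\,{\rm bott}(u^2,v^2)$ (since $U^2,V^2$ are the block-diagonal amplifications of $u^2,v^2$), so $\tau({\rm bott}(u^2,v^2))=0$ for every $\tau\in T(A)$, i.e., ${\rm bott}(u^2,v^2)\in \ker\rho_A$.

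Combined with $[L]({\rm bott}(u_{1/2}^2, v_{1/2}^2))={\rm bott}(u^2, v^2)$, this yields $[L]|_{\ker\rho_{A_{1/2}}}\subset \ker\rho_A$. Lemma \ref{ML2} then produces a unital \hm\, $\phi:A_{1/2}\to A$ within $\ep/2$ of $L$ on $u_{1/2},v_{1/2}$; setting $\tilde u=\phi(u_{1/2})$, $\tilde v=\phi(v_{1/2})$ gives $\tilde u\tilde v=-\tilde v\tilde u$, $\|u-\tilde u\|<\ep$, $\|v-\tilde v\|<\ep$. The step I expect to demand the most care is justifying the operator identity $\log(UVU^*V^*)=(\log_0(uvu^*v^*)-i\pi)\otimes I$ via functional calculus: one needs $\dt$ small enough that the spectrum of $uvu^*v^*$ lies in the compact domain $F$ of $\log_0$ and that the spectrum of $UVU^*V^*$ lies in the domain of the standard $\log$, both of which follow from $\dt\ll 1$ together with $\|uvu^*v^*-(-1)\|\le 2\dt$.
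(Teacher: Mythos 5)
Your proposal is correct, and its skeleton is exactly the paper's: the paper also proves Theorem \ref{MT3} by running the proof of Theorem \ref{MT2} with $q=2$ and observing that the only step needing a new argument is the verification that $(\tau\otimes\Tr)(\log(UVU^*V^*))=0$, after which Lemma \ref{Rieffelproj}, the Exel trace formula, the identities ${\rm bott}(U^2,V^2)=4\,{\rm bott}(U,V)=2\,{\rm bott}(u^2,v^2)$, and Lemma \ref{ML2} are used just as you use them. The one place you diverge is in that trace computation. The paper writes $uvu^*v^*=e^{ia}$ with $a=a^*$, ${\rm spec}(a)\subset(\pi-\pi/10,\pi+\pi/10)$ and $\tau(a)=\pi$, and then inserts the phase factors $e^{-\pi i/3}$ and $e^{-2\pi i/3}$ so that $UVU^*V^*$ is exhibited as a product of two commuting elements whose spectra lie in a sector where the principal logarithm is additive; summing the two logarithms gives $2\tau(-\tfrac{\pi i}{3}+ia)-\tfrac{4\pi i}{3}=0$. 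You instead use the single functional-calculus identity $\log(-w)=\log_0(w)-i\pi$ on a small arc around $-1$ (legitimate, since $\|uvu^*v^*+1\|=\|uv+vu\|<\dt$ puts ${\rm spec}(uvu^*v^*)$ in the interior of $F$ and ${\rm spec}(UVU^*V^*)$ near $1$), which is shorter and makes the role of the hypothesis $\tau(\log_0(uvu^*v^*))=\pi i$ completely transparent; the paper's phase-splitting is a more hands-on way of doing the same branch bookkeeping without invoking the comparison of the two branches explicitly. Two small points you leave implicit but which are immediate with the MT2-style choice of $\dt$: one needs $\|UV-VU\|=\|(uv+vu)\otimes S_2S_1\|=\|uv+vu\|<\dt_0$ so that ${\rm bott}(U,V)$ is defined and the Exel formula applies to it, and the multiplicativity ${\rm bott}(U^2,V^2)=4\,{\rm bott}(U,V)$ likewise requires $\dt$ small (whence the factor $2q^2$ in the paper's choice of $\dt$).
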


\begin{proof}
The case that $A$ is a unital simple finite dimensional \CA\, follows from Theorem 8.3.4 of \cite{ELP1}.
We will consider only infinite dimensional simple \CA s of real rank zero and stable rank one.
The proof is exactly the same as that of Theorem \ref{MT2} except the part
to verify (\ref{MT-14}), i.e.
\beq\label{MT3-3}
{\rm bott}(U,V)\in {\rm ker}\rho_{A}.\nonumber
\eneq
In other words, using the Exel trace formula, we need to show that
\beq\label{MT3-4}
(\tau\otimes \mbox{Tr})(\log(UVU^*V^*))=0\tforal \tau\in T(A).
\eneq
We compute that
\beq\label{MT3-4+2}\nonumber
UV=uv\otimes S_2S_1\approx_{\dt} e^{2\pi i\theta} vu\otimes (e^{-2\pi i\theta})S_1S_2=VU.
\eneq

We may assume
that

\beq\label{MT3-7}
\|uv+vu\|<1/10
\eneq
and \beq \label{MT3-8}\frac{1}{2\pi i}\tau({\log}_0(uvu^*v^*)=1/2.\eneq
for all $\tau\in T(A).$
Therefore (by (\ref{MT3-7}))
\beq\label{MT3-9}\nonumber
uvu^*v^*=\exp(i a),
\eneq
for some $a\in A_{s.a.}$ with
${\rm spec}(a)\subset  (\pi-\pi/10, \pi+\pi/10).$
Moreover, by (\ref{MT3-8})
\beq\label{MT3-10}\nonumber
\tau(a)=\pi,
\eneq
 for all $\tau\in T(A).$

 For any $\tau\in T(A),$ we have
\beq \label{MT3-6}
&&\frac{1}{2\pi i}(\tau\otimes \mbox{Tr})(\log(UVU^*V^*))\nonumber\\
&=&{1\over{2\pi i}}(\tau\otimes \mbox{Tr})(\log((u\otimes S_2)(v\otimes S_1)(u^*\otimes S_2^*)(v^*\otimes S_1^*)))\nonumber\\
&=&{1\over{2\pi i}} (\tau\otimes \mbox{Tr})(\log((uvu^*v^*\otimes 1_{M_2})\cdot (e^{-\pi i} \cdot 1_{M_{2}(A)})))\nonumber\\
&=&{1\over{2\pi i}}(\tau\otimes \mbox{Tr})(\log((e^{-\pi i/3}\cdot (e^{i a} \otimes 1_{M_{2}}))\cdot (e^{-\pi i+\pi i/3}\cdot 1_{M_{2}(A)}))).\nonumber
\eneq

Note that
\beq\label{MT3-1-1}\nonumber
&&{\rm spec}((u\otimes S_2)(v\otimes S_1)(u^*\otimes S_2^*)(v^*\otimes S_1^*)),\quad {\rm spec}(e^{{-\pi i\over{ 3}}}\cdot e^{i a} \otimes 1_{M_{2}}),\\
&& \mbox{and }{\rm spec}(e^{-2\pi i/3}\cdot 1_{M_{2}(A)})\nonumber
\eneq
are all in
$\{e^{it}: t\in [{-2\pi\slash 3} , \pi+\pi/10-\pi/3] \}.$

Since $e^{-\pi i/3}\cdot e^{i a} \otimes 1_{M_{2}}$ commutes with
$e^{-\pi i+\pi i/3}\cdot 1_{M_{2}(A)},$
we have
\beq\label{Rr2-6}
&&(\tau\otimes \mbox{Tr})(\log((e^{-\pi i/3}\cdot e^{i a} \otimes 1_{M_{2}})\cdot (e^{-\pi i+\pi i/3}\cdot 1_{M_{2}(A)})))\nonumber\\
&=&(\tau\otimes \mbox{Tr})(\log(e^{-\pi i/3}\cdot e^{i a} \otimes 1_{M_{2}}))+
(\tau\otimes \mbox{Tr})(\log(e^{-\pi i+\pi i/3}\cdot 1_{M_{2}(A)}))\nonumber\\
&=& (\tau\otimes \mbox{Tr})(\log(e^{-\pi i/3+i a} \otimes 1_{M_{2}}))
-2{2\pi i\over{3}}\nonumber\\
&=& 2\tau({-\pi i\over{3}}+a i)-2{2\pi i\over{3}}=0.\nonumber
\eneq

It follows that (\ref{MT3-4}) holds.
\end{proof}


%

\section*{Acknowledgments}
The first named author was supported by the Zhejiang Provincial Natural Science Foundation of China (Nos.
LQ12A01015, LQ13A010016) and by the Research Center 
for Operator Algebras in East China Normal University.
  The second named author was partially supported by a grant from
East China Normal University and by a NSF grant. The authors acknowledge that they were benefited from conversations with Zhuang Niu. This work was done when both authors were in
the Research Center for Operator Algebras in East China Normal University. We would also like to thank
Terry Loring  who brought our attention to results in \cite{EL} and \cite{ELP1}.

hlin@uoregon.edu


\begin{thebibliography}{1}
\bibitem{Berg} I. D. Berg and K. R. Davidson, {\it Almost commuting matrices and the Brown-Douglas-Fillmore theorem,} Bull. Amer. Math. Soc. (N.S.) \textbf{16}(1987), no. 1, 97-100.

\bibitem{Boca} F. P. Boca, {\it Rotation algebras and continued fractions,} Operator algebras, operator theory and applications, 121-142, Oper. Theory Adv. Appl., \textbf{181}, Birkh$\ddot{\rm a}$user Verlag, Basel, 2008.

\bibitem{Choi} M. D. Choi, {\it Almost commuting matrices need not be nearly commuting,} Proc. Amer. Math. Soc. \textbf{102}(1988), no. 3, 529-533.

\bibitem{CEY} M. D. Choi, G. A. Elliott and N. Yui, {\it Gauss polynomials and the rotation algebra,} Invent. math. \textbf{99}(1990), no. 2, 225-246.

\bibitem{Davidson} K. R. Davidson, {\it Almost commuting Hermitian matrices,} Math. Scand. \textbf{56}(1985), no. 2,
222-240.

\bibitem{EL} S. Eilers and T. A. Loring, {\it Computing contingencies for stable relations,} Internat. J. Math. \textbf{10}(1999), no. 3, 301-326.

\bibitem{ELP1} S. Eilers, T. A. Loring and G. K. Pedersen, {\it Stability of anticommutation relations: an application of noncommutative CW complexes,} J. Reine Angew. Math. \textbf{499}(1998), 101-143.

\bibitem{ELP} S. Eilers, T. A. Loring and G. K. Pedersen, {\it Morphisms of extensions of $C^*$-algebras: pushing forward the Busby invariant,}
 Adv. Math. \textbf{147}(1999), no. 1, 74-109.

\bibitem{ElliottE} G. A. Elliott and D. E. Evans, {\it The structure of the irrational rotation $C^*$-algebra,} Ann. of
Math.(2) \textbf{138}(1993), no. 3, 477-501.

\bibitem{Exel1} R. Exel, {\it   Rotation numbers for automorphisms of $C^*$-algebras,} Pacific J. Math. \textbf{127}(1987), no. 1, 31-89.

\bibitem{Exel} R. Exel, {\it The soft torus and applications to almost commuting matrices,} Pacific J. Math. \textbf{160}(1993), no. 2, 207-217.

\bibitem{ExL1} R. Exel and T. A. Loring, {\it Almost commuting unitary matrices,} Proc. Amer. Math. Soc.
\textbf{106}(1989), no. 4, 913-915.

\bibitem{ExL2} R. Exel and T. A. Loring, {\it Invariants of almost commuting unitaries,} J. Funct. Anal.
\textbf{95}(1991), no. 2, 364-376.

\bibitem{Friis} P. Friis and M. R\o rdam, {\it Almost commuting self-adjoint matrices--a short proof of
Huaxin Lin's theorem,} J. Reine Angew. Math. \textbf{479}(1996), 121-131.

\bibitem{Golin} G. Gong and H. Lin, {\it Almost multiplicative morphisms
and almost commuting matrices,} J. Operator Theory
\textbf{40}(1998), no. 2, 217-275.

\bibitem{Halmos} P. R. Halmos, {\it Some unsolved problems of unknown depth about operators on Hilbert space,}
Proc. Roy. Soc. Edinburgh Sect. A \textbf{76}(1976/77), no. 1, 67-76.

\bibitem{Harpe} P. de la Harpe and G. Skandalis, {\it D$\acute{e}$terminant associ$\acute{e}$ $\grave{a}$ une trace sur une alg$\acute{e}$bre de Banach. (French) [Determinant associated with a trace on a Banach algebra]} Ann. Inst. Fourier (Grenoble) \textbf{34}(1984), no. 1, 241-260.

\bibitem{Krohn} R. H\o egh-Krohn and T. Skjelbred, {\it Classification of $C^*$-algebras admitting ergodic actions of the two-dimensional torus,} J. Reine Angew. Math. \textbf{328}(1981), 1-8.

\bibitem{Li} L. Li, {\it $C^*$-algebra homomorphisms and KK-theory,} K-Theory \textbf{18}(1999), no. 2, 161-172.

\bibitem{Lin2} H. Lin, {\it Almost commuting selfadjoint matrices and applications,} Operator Algebras and
Their Applications (Waterloo, ON, 1994/1995), Fields Inst. Commun., vol. 13, Amer. Math.
Soc., Providence, RI, 1997, 193-233.

\bibitem{LinTAM99} H. Lin, {\it When almost multiplicative morphisms
are close to homomorphisms,} Trans. Amer. Math. Soc. \textbf{351}(1999), no. 12, 5027-5049.

\bibitem{LinPLMS} H. Lin,  {\it The tracial topological rank of
$C^*$-algebra,} Proc. London  Math. Soc. (3) \textbf{83}(2001), no. 1, 199-234.

\bibitem{LinTAM04} H. Lin, {\it A separable Brown-Douglas-Fillmore theorem and weak stability,} Trans. Amer. Math. Soc. \textbf{356}(2004), no. 7, 2889-2925.

\bibitem{LnTAM} H. Lin, {\it Classification of homomorphisms and dynamical systems,} Trans. Amer. Math. Soc. \textbf{359}(2007), no. 2, 859-895.

\bibitem{LnMZ} H. Lin, {\it The range of approximate unitary equivalence classes of homomorphisms from AH-algebras,}  Math. Z. \textbf{263}(2009), no. 4, 903-922.

\bibitem{LinInv} H. Lin, {\it Asymptotic unitary equivalence and classification of simple amenable $C^*$-algebras,} Invent. Math. \textbf{183}(2011), no. 2, 385-450.

\bibitem{Lor} T. A. Loring, {\it $K$-theory and asymptotically commuting matrices,} Canad. J. Math. \textbf{40}(1988), no. 1, 197-216.

\bibitem{Lor1} T. A. Loring, {\it When matrices commute,} Math. Scand. \textbf{82}(1998), no. 2, 305-319.

\bibitem{PimV} M. Pimsner and D. Voiculescu, {\it Exact sequences for $K$-groups and Ext-groups of certain crossed product $C^*$-algebras,} J. Operator Theory \textbf{4}(1980), no. 1, 93-118.

\bibitem{Rieffel} M. A. Rieffel, {\it The cancellation theorem for projective
modules over irrational rotation $C^*$-algebras,} Proc. London Math. Soc. (3) \textbf{47}(1983), no. 2, 285-302.

\bibitem{Rosenthal} P. Rosenthal, {\it Research Problems: Are almost commuting matrices near commuting
matrices?,} Amer. Math. Monthly \textbf{76}(1969), no. 8, 925-926.


\bibitem{Voiculescu} D. Voiculescu, {\it Asymptotically commuting finite rank unitary operators without commuting
approximants,} Acta Sci. Math. (Szeged) \textbf{45}(1983), no. 1-4, 429-431.







\end{thebibliography}
\end{document}